\theoremstyle{plain}
\newtheorem{theorem}{Theorem}
\newtheorem{lemma}[theorem]{Lemma}
\newtheorem{prop}[theorem]{Proposition}
\theoremstyle{remark}
\newtheorem{remark}[theorem]{Remark}       
\newtheorem{example}[theorem]{Example}
\newtheorem{exercise}[theorem]{Exercise}
\newtheorem{assumption}[theorem]{Assumption}
\newtheorem*{definition*}{Definition}
\newtheorem{definition}[theorem]{Definition}
\def\d{\partial}
\newcommand{\Babuska}{Babu{\v{s}}ka}       % Remember: Usage is \Babuska\
\newcommand{\Poincare}{Poincar{\'{e}}}     % when needed, but when ending
\newcommand{\RRR}{\mathbb{R}}
\newcommand{\CCC}{\mathbb{C}}
\newcommand{\dive}{\mathop\mathrm{div}}
\newcommand{\grad}{\ensuremath{\mathop{{\bf{grad}}}}}
\newcommand{\diam}{\ensuremath{\mathop\mathrm{diam}}}
\newcommand{\defn}{\;\stackrel{\mathrm{def}}{=}\;}
\newcommand{\ran}{\mathop\mathrm{ran}}
\newcommand{\supp}{\mathop\mathrm{supp}}
\newcommand{\Hdiv}[1]{\bH(\dive,#1)}
\newcommand{\DD}{\mathcal{D}}
\newcommand{\veps}{\varepsilon}
\newcommand{\vpi}{\varPi}
\newcommand{\ntrip}[1]{\vvvert {#1} \vvvert}  % requires mathabx
\newcommand{\ip}[1]{\langle {#1} \rangle}
\newcommand{\matlab}{MATLAB\textregistered\renewcommand{\matlab}{MATLAB}}
\newcommand{\femlab}{FEMLAB\textregistered\renewcommand{\femlab}{FEMLAB}}
\newcommand{\tfae}{the following are equivalent}
\newcommand{\Tfae}{The following are equivalent}
\newcommand{\cts}{continuous}
\newcommand{\nls}{normed linear space}
\newcommand{\nlss}{normed linear spaces}
\renewcommand{\grad}{\mathop{\mathrm{grad}}}
\renewcommand{\Hdiv}[1]{H(\dive,{#1})}
\newcommand{\fenics}{{\tt{FEniCS}}}
\newcommand{\trn}{\mathop{\mathrm{tr}_n}}
\newcommand{\pirt}{\mathop{\varPi_{\scriptscriptstyle{\mathrm{RT}}}}}
\newcommand{\hu}{\hat u}
\newcommand{\hb}{\hat b}
\newcommand{\hx}{\hat x}
\newcommand{\hX}{\hat X}
\newcommand{\hq}{\hat q}
\newcommand{\hqh}{\hat q_{n,h}}
\newcommand{\hr}{\hat r}
\newcommand{\hs}{\hat s}
\newcommand{\hsh}{\hat s_{n,h}}
\newcommand{\jmpn}[1]{\big| [#1] \big|_{\d\oh}}
\newcommand{\om}{\varOmega}
\newcommand{\oh}{{\varOmega_h}}
\DeclareMathOperator{\aarg}{arg}
\newcommand{\Xoh}{X_{h,0}}
\newcommand{\Yhopt}{Y_h^\mathrm{opt}}
\newcommand{\Yohopt}{Y_{h,0}^\mathrm{opt}}
\newcommand{\Yohr}{Y_{h,0}^r}
\newcommand{\Yo}{Y_{0}}
\newcommand{\xh}{x_h}
\newcommand{\Xh}{X_h}
\newcommand{\yh}{y_h}
\title{Five lectures on DPG methods}
\author{Jay Gopalakrishnan}
\address{\hrule \vspace{0.1cm} 
  {\it Address:} PO Box 751, Portland State University, Portland, OR 97207-0751.}
\email{gjay@pdx.edu}
\begin{document}

\noindent{\framebox{\parbox{0.99\textwidth}{
\noindent {Lecture Notes}\hfill Jay Gopalakrishnan
\vspace{0.1cm}\hrule \vspace{0.2cm}

\bigskip

\noindent {\bf{FIVE LECTURES ON DPG METHODS}} \hfill
Spring 2013, Portland

%\vspace{0.1cm}
%\hrule
\vspace{0.2cm} 
% {{\tiny{[First Version in arXiV:1306.0557 \hfill
% Current version,  December 2013]}}}
}}}

\bigskip

These lectures present a relatively recent introduction into the class
of discontinuos Galerkin (DG) methods, named {\em Discontinuous
  Petrov-Galerkin} (DPG) methods.  DPG methods, in which DG spaces
form a critical ingredient, can be thought of as least-square methods
in nonstandard norms, or as Petrov-Galerkin methods with special test
spaces, or as a nonstandard mixed method.  We will pursue all these
points of view in this lecture.

By way of preliminaries, let us recall two results from the
\Babuska-Brezzi theory.  Throughout, $b(\cdot,\cdot) : X \times Y \to
\CCC$ is a continuous sesquilinear form where $X$ and $Y$ are
(generally not equal) \nlss\ over the complex field $\CCC$, $Y^*$
denotes the space of continuous conjugate-linear functionals on $Y$,
and $\ell \in Y^*$.

\begin{theorem}
  \label{thm:BNB}
  Suppose $X$ is a Banach space and $Y$ is a reflexive Banach
  space. The following three statements are equivalent:
  \begin{enumerate}[\quad a)\;]
  \item \label{item:bnb1} 
    For any $\ell\in Y^*$, there is a unique $x\in X$ satisfying 
    \begin{equation}
      \label{eq:weakform}
      b(x,y) = \ell(y)\qquad\forall y\in Y. 
    \end{equation}
  \item \label{item:bnb2} $ \{ y\in Y: \; b(z,y)=0, \;\forall z\in X\}
    = \{0\}$ and there is a $C_1>0$ such that
    \begin{gather}
      \label{eq:inf-sup}
      \inf_{0\ne z \in X} \sup_{0\ne y \in Y} 
      \frac{| b(z,y)|}{ \| z \|_X \| y \|_Y } \ge C_1,
    \end{gather}
  \item \label{item:bnb3} $\{ z\in X: \; b(z,y)=0, \;\forall y\in Y\} = \{0\}$
    and there is a $C_1>0$ such that 
    \begin{gather}
      \label{eq:inf-sup-adj}
      \inf_{0\ne y \in Y} \sup_{0\ne z \in X} 
      \frac{| b(z,y)|}{ \| y \|_Y \| z \|_X } \ge C_1.
    \end{gather}
  \end{enumerate}
\end{theorem}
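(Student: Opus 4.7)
The plan is to recast the statement in terms of the bounded linear operator $B : X \to Y^*$ defined by $(Bx)(y) = b(x,y)$, so that (a) becomes: $B$ is bijective. Since $\|Bx\|_{Y^*} = \sup_{0\ne y\in Y}|b(x,y)|/\|y\|_Y$, the inf-sup \eqref{eq:inf-sup} is equivalent to $B$ being bounded below. The trivial-annihilator condition in (b) says that no nonzero $y \in Y$, viewed as an element of $Y^{**}$ via reflexivity, can annihilate $\ran B \subseteq Y^*$; by Hahn--Banach this is equivalent to $\ran B$ being dense in $Y^*$.

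For (a)$\,\Rightarrow\,$(b): bijectivity of $B$ and the open mapping theorem give a bounded inverse, hence the inf-sup; the annihilator condition is automatic since $\ran B = Y^*$. For (b)$\,\Rightarrow\,$(a): $B$ bounded below is injective with closed range, and this range is dense by the annihilator condition, so $\ran B = Y^*$, whence $B$ is bijective.

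I would handle (a)$\,\Leftrightarrow\,$(c) symmetrically via the companion operator $B' : Y \to X^*$ satisfying $\|B'y\|_{X^*} = \sup_{0\ne x\in X}|b(x,y)|/\|x\|_X$, which under $Y^{**}\cong Y$ is the Banach-space adjoint of $B$ (up to sesquilinear conjugation). Then \eqref{eq:inf-sup-adj} is equivalent to $B'$ being bounded below, while the other condition in (c) is exactly injectivity of $B$. Given (a), bounded invertibility of $B$ combined with the Hahn--Banach identity $\|y\|_Y = \sup_{\|\ell\|_{Y^*}\le 1}|\ell(y)|$ (valid by reflexivity) yields \eqref{eq:inf-sup-adj}: for each $\ell$ take $z = B^{-1}\ell$, observe $|\ell(y)| = |b(z,y)|$, and use $\|z\|_X \le \|B^{-1}\|\,\|\ell\|_{Y^*}$ to obtain the desired lower bound. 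Conversely, $B'$ bounded below is injective with closed range, so by the closed range theorem $\ran B$ is closed in $Y^*$ and coincides with the annihilator of $\ker B' = \{0\}$, namely all of $Y^*$; this surjectivity combined with the injectivity asserted in (c) gives bijectivity.

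The main obstacle will be this last step, (c)$\,\Rightarrow\,$(a), where reflexivity of $Y$ is essential: it is what identifies the annihilator of $\ran B$ (a priori a subset of $Y^{**}$) with a subset of $Y$, making both the Hahn--Banach density argument and the closed range theorem applicable in the form needed. Without reflexivity one can only conclude a weaker solvability statement in a completion or a bidual.
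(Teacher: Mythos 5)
The paper itself does not prove Theorem~\ref{thm:BNB}; it is quoted as a known preliminary with the proof deferred to earlier class notes, so there is nothing in the text to compare your argument against. Your proof is the standard operator-theoretic one and it is correct: recasting (a) as bijectivity of $B:X\to Y^*$, reading \eqref{eq:inf-sup} as ``$B$ bounded below,'' using reflexivity plus Hahn--Banach to turn the annihilator condition in (b) into density of $\ran B$, and handling (c) through the adjoint $B':Y\to X^*$ together with the closed range theorem (equivalently, the duality ``$B'$ bounded below $\iff$ $B$ onto''). You also correctly isolate where reflexivity enters, namely in identifying functionals on $Y^*$ with elements of $Y$ so that the annihilator and closed-range arguments apply. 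Two small points: the identity $\|y\|_Y=\sup_{\|\ell\|_{Y^*}\le 1}|\ell(y)|$ is a consequence of Hahn--Banach alone (the canonical embedding of $Y$ into its bidual is always isometric), so attributing it to reflexivity is unnecessary; and since the paper's $Y^*$ consists of \emph{conjugate-linear} functionals, the identifications $B'\cong B^*$ and $(Y^*)^*\cong Y$ carry some conjugations that you wave at with ``up to sesquilinear conjugation'' --- harmless, but worth writing out once if this were to be graded in full detail.
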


\begin{theorem}
  \label{thm:BNBapprox}
  Suppose $X$ and $Y$ are Hilbert spaces, $X_h \subset X$ and
  $Y_h\subset Y$ are finite dimensional subspaces,
  $\dim(X_h)=\dim(Y_h)$, and suppose one of (\ref{item:bnb1}),
  (\ref{item:bnb2}) or (\ref{item:bnb3}) of Theorem~\ref{thm:BNB}
  hold. If, in addition, there exists a $C_3>0$ such that
  \begin{equation}
    \label{eq:discrete-inf-sup}
    \inf_{0\ne z_h \in X_h} 
    \sup_{0\ne y_h\in Y_h} \frac{ | b(z_h,y_h) |} { \| y_h \|_Y}
    \ge
    C_3,
    % \| z_h \|_X \le
    % \sup_{0\ne y_h\in Y_h} \frac{ | b(z_h,y_h) |} { \| y_h \|_Y}
    % \qquad \forall z_h\in X_h,
  \end{equation}
  then there is a unique $x_h\in X_h$ satisfying 
  \begin{equation}
    \label{eq:2}
    b(x_h,y_h) = \ell(y_h)\qquad\forall y_h\in Y_h, 
  \end{equation}
  and 
  \[
  \| x- x_h \|_X \le \frac{C_2}{C_3} \inf_{z_h\in X_h } \| x - z_h \|_X,
  \]
  where $C_2>0$ is any constant for which the inequality $|b(x,y)|
  \le C_2 \| x \|_X \|y\|_Y$ holds for all $x\in X$ and $y\in Y$.
\end{theorem}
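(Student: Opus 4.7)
The plan is to first establish unique solvability of the discrete system, then to represent $x_h$ as the image of $x$ under a bounded projection $P_h : X \to X_h$ whose operator norm is controlled by $C_2/C_3$, and finally to invoke a Hilbert-space projection identity that converts the naive triangle-inequality constant $1 + C_2/C_3$ into the sharp constant $C_2/C_3$. This final step is the only nonroutine ingredient and is where the Hilbert-space hypothesis on $X$ is used essentially.

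Existence and uniqueness of $x_h$ follow from the square-system structure: since $\dim X_h = \dim Y_h$, it suffices to verify uniqueness, and if $b(x_h, y_h) = 0$ for all $y_h \in Y_h$ then~(\ref{eq:discrete-inf-sup}) forces $x_h = 0$. Next, define $P_h : X \to X_h$ by requiring $b(P_h x, y_h) = b(x, y_h)$ for all $y_h \in Y_h$; this is well defined by the preceding step. Since $P_h z_h = z_h$ for every $z_h \in X_h$, the operator $P_h$ is a linear projection onto $X_h$, and in particular $x_h = P_h x$ when $x$ solves~(\ref{eq:weakform}). Applying~(\ref{eq:discrete-inf-sup}) to $P_h x$, then using the defining relation for $P_h$, and finally the continuity bound $|b(x,y)| \le C_2 \|x\|_X \|y\|_Y$, one obtains
\[
C_3 \,\| P_h x \|_X \;\le\; \sup_{0 \ne y_h \in Y_h} \frac{|b(P_h x, y_h)|}{\|y_h\|_Y} \;=\; \sup_{0 \ne y_h \in Y_h} \frac{|b(x, y_h)|}{\|y_h\|_Y} \;\le\; C_2 \,\| x \|_X,
\]
so $\|P_h\|_{X \to X} \le C_2/C_3$.

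For any $z_h \in X_h$, the identity $x - x_h = (I - P_h)(x - z_h)$ yields $\|x - x_h\|_X \le \|I - P_h\|_{X \to X} \,\|x - z_h\|_X$. The main obstacle is to avoid the crude estimate $\|I - P_h\| \le 1 + \|P_h\|$, which would yield only the nonsharp constant $1 + C_2/C_3$. This is resolved by the Kato--Xu--Zikatanov identity $\|I - P_h\|_{X \to X} = \|P_h\|_{X \to X}$, valid for any bounded linear projection on a Hilbert space that is neither zero nor the identity (the degenerate cases $X_h = \{0\}$ or $X_h = X$ are immediate). Substituting this into the previous bound and taking the infimum over $z_h \in X_h$ completes the proof.
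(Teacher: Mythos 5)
Your proof is correct. The paper states Theorem~\ref{thm:BNBapprox} without proof (it defers to earlier class notes), so there is no in-paper argument to compare against; what you give --- unique solvability from the square-system structure together with the discrete inf-sup condition, the bound $\|P_h\|_{X\to X}\le C_2/C_3$ for the discrete (Galerkin) projection, and the identity $\|I-P_h\|=\|P_h\|$ for a bounded idempotent on a Hilbert space that is neither $0$ nor $I$ --- is precisely the standard Kato/Xu--Zikatanov route to the sharp constant $C_2/C_3$, and the Hilbert-space hypothesis on $X$ is indeed what licenses replacing the classical $1+C_2/C_3$ by $C_2/C_3$.
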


We studied these well-known theorems in earlier lectures -- see your earlier
class notes for their full proofs and examples. Methods of the form~\eqref{eq:2}
with $X_h \ne Y_h$ are called {\em Petrov-Galerkin (PG) methods} with
{\em trial space} $X_h$ and {\em test space} $Y_h$.  Note the standard
difficulty: The {\em inf-sup condition}~\eqref{eq:inf-sup} does
{\underline{not}} in general imply the {\em discrete inf-sup
  condition}~\eqref{eq:discrete-inf-sup}.

\section{Optimal test spaces}
\label{sec:optimal-test-spaces}

Although~\eqref{eq:inf-sup}$\centernot\implies$~\eqref{eq:discrete-inf-sup}
in general, we now ask: Is it possible to find a test space~$Y_h$
for which ~\eqref{eq:inf-sup}$ \implies$~\eqref{eq:discrete-inf-sup}?
We now show that the answer is simple and affirmative. From now on,
$X$ and $Y$ are Hilbert spaces, $(\cdot,\cdot)_Y$ denotes the inner
product on $Y$, and $X_h$ is a finite-dimensional subspace of $X$
(where $h$ is some parameter related to the dimension).

\begin{definition}
  Given any trial space $X_h$, we define its {\bf \em optimal test space}
  for the continuous sesquilinear form $b(\cdot,\cdot) : X \times Y
  \to \CCC$ by
\[
\Yhopt = T(X_h)
\]
where $T: X \to Y$ (the {\em trial-to-test operator}) is defined by 
\begin{equation}
  \label{eq:T}
  (T z, y)_Y = b (z, y) \qquad\forall y\in Y, \; z\in X.
\end{equation}
Equation~\eqref{eq:T} uniquely defines a $Tz$ for any given $z\in X$,
by Riesz representation theorem.  We call $Tz$ the ``optimal'' test
function of $z$, because it solves an optimization problem, as we see
next.
\end{definition}

\begin{prop}[Optimizer]
  \label{prop:optimalopti}
  For any $z\in X$, the maximum of  
  \[
  f_z(y)= \frac{ | b(z,y) |}{ \| y \|_Y}
  \]
  over all nonzero $y\in Y$ is attained  at $y=Tz$.
\end{prop}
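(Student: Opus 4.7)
The plan is to show that $f_z(y) \le \|Tz\|_Y$ for every nonzero $y \in Y$, and that the value $\|Tz\|_Y$ is actually attained when $y = Tz$. Both steps follow from Cauchy-Schwarz in the Hilbert space $Y$ applied to the identity $b(z,y) = (Tz, y)_Y$ that defines the trial-to-test operator.

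More concretely, I would first rewrite the numerator of $f_z$ using the defining relation \eqref{eq:T} to get
\[
f_z(y) = \frac{|(Tz, y)_Y|}{\|y\|_Y}.
\]
Then Cauchy-Schwarz in $Y$ gives $|(Tz,y)_Y| \le \|Tz\|_Y \|y\|_Y$, so $f_z(y) \le \|Tz\|_Y$ for every nonzero $y$. Substituting $y = Tz$ (when $Tz \ne 0$) yields
\[
f_z(Tz) = \frac{|(Tz, Tz)_Y|}{\|Tz\|_Y} = \frac{\|Tz\|_Y^2}{\|Tz\|_Y} = \|Tz\|_Y,
\]
so the upper bound is attained and $Tz$ is a maximizer.

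The only subtlety, which I would handle explicitly at the end, is the degenerate case $Tz = 0$. By \eqref{eq:T} this is equivalent to $b(z, \cdot) \equiv 0$ on $Y$, in which case $f_z$ is identically zero on the set of nonzero $y$, and there is nothing to maximize (every $y$ trivially attains the maximum, so we may as well say $Tz = 0$ does). This is not really an obstacle but merely a convention; the substantive content is the Cauchy-Schwarz argument above, and I do not anticipate any further technical difficulty.
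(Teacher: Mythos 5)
Your argument is correct and is essentially the same as the paper's: the paper's one-line appeal to ``duality in Hilbert spaces'' is precisely the Cauchy--Schwarz bound $|(Tz,y)_Y|\le \|Tz\|_Y\|y\|_Y$ together with the observation that $f_z(Tz)=\|Tz\|_Y$, which you spell out explicitly. Your remark on the degenerate case $Tz=0$ is a harmless extra that the paper leaves implicit.
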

\begin{proof}
  By duality in Hilbert spaces,
  \begin{align*}
    \sup_{0\ne y\in Y} f_z(y) & = \sup_{0\ne y\in Y} 
    \frac{ |(Tz,y)_Y|}{\| y \|_Y } = \| Tz \|_Y,
  \end{align*}
  and $f_z(Tz) = \|Tz\|_Y.$
\end{proof}

\begin{prop}[Exact inf-sup condition $\implies$ Discrete inf-sup condition] 
  \label{prop:inf-sup-follows}
  If~\eqref{eq:inf-sup} holds, then~\eqref{eq:discrete-inf-sup} holds
  with $C_3=C_1$ when we set $Y_h = \Yhopt$.
\end{prop}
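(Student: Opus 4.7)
The plan is to observe that by construction $Y_h^{\mathrm{opt}}$ contains the very element which realizes the supremum from Proposition~\ref{prop:optimalopti}, so passing from the supremum over all of $Y$ to the supremum over $Y_h^{\mathrm{opt}}$ loses nothing when the argument $z$ lies in $X_h$. Once that equality of suprema is recorded, the discrete inf-sup drops out of the continuous inf-sup instantly.

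Concretely, I would fix an arbitrary nonzero $z_h \in X_h$. By definition of $\Yhopt = T(X_h)$, the element $T z_h$ lies in $\Yhopt$, and by Proposition~\ref{prop:optimalopti} (applied with $z=z_h$) it attains the supremum of $f_{z_h}$ over all of $Y$, with
\[
\sup_{0\ne y\in Y} \frac{|b(z_h,y)|}{\|y\|_Y} \;=\; \|T z_h\|_Y \;=\; \frac{|b(z_h, T z_h)|}{\|T z_h\|_Y}.
\]
The right-hand side is bounded above by the supremum over $y_h \in \Yhopt \subseteq Y$, and that supremum is in turn bounded above by the supremum over all of $Y$. Hence the two suprema coincide:
\[
\sup_{0\ne y_h\in \Yhopt} \frac{|b(z_h,y_h)|}{\|y_h\|_Y}
\;=\;
\sup_{0\ne y\in Y} \frac{|b(z_h,y)|}{\|y\|_Y}.
\]

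Dividing by $\|z_h\|_X$ and taking the infimum over $0\ne z_h\in X_h \subseteq X$, the right-hand side is bounded below by the full continuous inf-sup over $X \times Y$, which is at least $C_1$ by hypothesis~\eqref{eq:inf-sup}. This yields~\eqref{eq:discrete-inf-sup} with $C_3 = C_1$. There is no real obstacle here; the only subtlety to flag is that one must notice that the sup-attainment from Proposition~\ref{prop:optimalopti} places the maximizer inside the chosen discrete test space, which is exactly what makes $\Yhopt$ ``optimal.''
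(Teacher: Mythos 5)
Your proposal is correct and follows essentially the same route as the paper's proof: both identify $Tz_h \in \Yhopt$ as the maximizer from Proposition~\ref{prop:optimalopti}, conclude that the supremum over $\Yhopt$ equals the supremum over all of $Y$ for each $z_h \in X_h$, and then pass to the infimum to inherit the constant $C_1$. No gaps.
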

\begin{proof}
  For any $z_h \in X_h$, letting 
  \[
  s_1 = \sup_{0\ne y\in Y} \frac{ | b( z_h, y) |}{ \| y \|_Y},
  \qquad
  s_2 = \sup_{0\ne y_h\in \Yhopt} \frac{ | b( z_h, y_h) |}{ \| y_h \|_Y}
  \]
  it is obvious that $s_1 \ge s_2$. To prove that $s_1 \le s_2$, since
  $s_1 = \| T z_h \|_Y$ by Proposition~\ref{prop:optimalopti},\\
  \[
    s_1
     = \| T z_h \|_Y= \frac{ |(Tz_h,T z_h)_Y|}{ \| Tz_h\|_Y}
    \le \sup_{y_h\in \Yhopt} 
    \frac{ |(Tz_h, y_h)_Y|}{ \| y_h\|_Y}
    = \sup_{y_h\in \Yhopt} 
    \frac{ |b(z_h, y_h)|}{ \| y_h\|_Y} =s_2,
  \]
  so $s_1 = s_2$. Hence the discrete inf-sup constant equals the exact
  inf-sup constant.
\end{proof}

\begin{definition}
  For {\em any} trial subspace $X_h \subset X$, the {\bf{\em ideal PG
      method}} finds $x_h\in {\Xh}$ solving 
  \begin{equation}
    \label{eq:ipg}
      b({\xh},{\yh}) = \ell({\yh}),
      \qquad\forall {\yh} 
      \in \Yhopt.
  \end{equation}
\end{definition}

\begin{assumption}
  \label{asm:wellposed}
  Suppose $\{ z \in X: \;b(z, y)=0,\; \forall y\in Y\} = \{0\}$ and
  suppose there exist $C_1, C_2>0$ such that
  \[
  C_1 \| y \|_Y \le 
  \sup_{0\ne z \in X} 
  \frac{| b(z,y)|}{ \| z \|_X } \le C_2 \| y \|_Y\qquad \forall y\in Y.
  \]
  The lower and upper inequalities are the exact inf-sup and
  continuity bounds, respectively.
\end{assumption}

\begin{theorem}[Quasioptimality]
  \label{thm:quasiopt}
  Assumption~\ref{asm:wellposed} $\implies$ the ideal PG
  method~\eqref{eq:ipg} is uniquely solvable for $x_h$ and
  \[
    \| x- x_h \|_X \le \frac{C_2}{C_1} \inf_{z_h\in X_h } \| x - z_h \|_X
  \]
  where $x$ is the unique exact solution of~\eqref{eq:weakform}.
\end{theorem}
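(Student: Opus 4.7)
The plan is to deduce the theorem as a direct application of Theorem~\ref{thm:BNBapprox} to the ideal PG method, with $Y_h = \Yhopt$. Thus the task reduces to checking that all hypotheses of Theorem~\ref{thm:BNBapprox} are met, and then tracking the constants carefully so that $C_3$ can be identified with $C_1$.

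First I would unpack Assumption~\ref{asm:wellposed}. The lower bound
\[
C_1 \| y \|_Y \le \sup_{0 \ne z \in X} \frac{|b(z,y)|}{\|z\|_X}
\qquad \forall y\in Y
\]
is exactly the adjoint inf-sup condition~\eqref{eq:inf-sup-adj}, and combined with the stated trivial-kernel property $\{z\in X : b(z,y)=0\;\forall y\in Y\} = \{0\}$, this is precisely statement (\ref{item:bnb3}) of Theorem~\ref{thm:BNB}. By that theorem, (\ref{item:bnb3}) is equivalent to (\ref{item:bnb1}) and (\ref{item:bnb2}), so the exact problem~\eqref{eq:weakform} has a unique solution $x$ and the inf-sup condition~\eqref{eq:inf-sup} holds with the same constant $C_1$. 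The upper bound in the assumption, read as $|b(z,y)| \le C_2 \|z\|_X \|y\|_Y$, provides the continuity constant needed by Theorem~\ref{thm:BNBapprox}.

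Next I would handle the discrete ingredients. Since~\eqref{eq:inf-sup} is now in force with constant $C_1$, Proposition~\ref{prop:inf-sup-follows} applies and yields the discrete inf-sup condition~\eqref{eq:discrete-inf-sup} on $Y_h = \Yhopt$ with $C_3 = C_1$. It remains only to verify the dimension hypothesis $\dim(\Xh) = \dim(\Yhopt)$. Since $\Yhopt = T(\Xh)$, this amounts to showing $T$ is injective on $\Xh$: if $T z_h = 0$ then~\eqref{eq:T} gives $b(z_h,y) = 0$ for all $y \in Y$, and the trivial-kernel condition forces $z_h = 0$. Hence $T|_{\Xh}$ is injective and dimensions match.

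With all hypotheses verified, Theorem~\ref{thm:BNBapprox} delivers a unique $x_h \in \Xh$ solving~\eqref{eq:ipg} together with the bound
\[
\|x - x_h\|_X \le \frac{C_2}{C_3}\inf_{z_h \in \Xh}\|x - z_h\|_X
= \frac{C_2}{C_1}\inf_{z_h \in \Xh}\|x - z_h\|_X,
\]
which is the stated estimate. There is no real obstacle here beyond carefully threading the equivalences in Theorem~\ref{thm:BNB} so that the adjoint form of the inf-sup in Assumption~\ref{asm:wellposed} is converted into the forward inf-sup required by Proposition~\ref{prop:inf-sup-follows}; the slightly delicate point to state explicitly is the dimension equality $\dim(\Yhopt) = \dim(\Xh)$, since without it Theorem~\ref{thm:BNBapprox} would not be applicable.
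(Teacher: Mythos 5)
Your proposal is correct and follows essentially the same route as the paper: injectivity of $T$ gives $\dim(\Yhopt)=\dim(X_h)$, the adjoint inf-sup of Assumption~\ref{asm:wellposed} is converted to the forward inf-sup with the same constant, Proposition~\ref{prop:inf-sup-follows} then yields the discrete inf-sup with $C_3=C_1$, and Theorem~\ref{thm:BNBapprox} finishes. The one point to state more carefully is that preservation of the constant $C_1$ in passing from statement~(\ref{item:bnb3}) to statement~(\ref{item:bnb2}) is not literally part of the equivalence asserted in Theorem~\ref{thm:BNB} (which only guarantees existence of \emph{some} positive constant); it requires the separate duality argument of Exercise~\ref{x:C1}, which is precisely what the paper invokes at this step.
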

\begin{proof}
  We want to apply Theorem~\ref{thm:BNBapprox}. To this end, first
  observe that $T$ is injective: Indeed, if $T z=0$, then
  by~\eqref{eq:T}, we have $b(z,y)=0$ for all $y\in Y$, so
  Assumption~\ref{asm:wellposed} implies that $z=0$. Thus $\dim(X_h) =
  \dim(\Yhopt)$.

  Furthermore, if the inf-sup condition of
  Assumption~\ref{asm:wellposed} holds, then it is an exercise (see
  Exercise~\ref{x:C1} below) to show that the other inf-sup condition,
  \begin{equation}
    \label{eq:dinfsup}
  C_1 \| z \|_Y \le 
  \sup_{0\ne y \in Y} 
  \frac{| b(z,y)|}{ \| y \|_Y } \qquad \forall z\in X,
  \end{equation}
  holds with the {\em same} constant $C_1$. This, together with
  Proposition~\ref{prop:inf-sup-follows} shows that the discrete
  inf-sup condition~\eqref{eq:discrete-inf-sup} holds with the same
  constant. Hence Theorem~\ref{thm:BNBapprox} gives the result.
\end{proof}

\begin{exercise}
  \label{x:Aadjinv}
  Suppose $Z_1,Z_2$ are Banach spaces and $A: Z_1 \to Z_2$ is a linear 
  \cts\ bijection. Then prove that the inverse of its dual $(A')^{-1}$
  exists, is continuous, and $ (A')^{-1} = (A^{-1})'.$
\end{exercise}

\begin{exercise}
  \label{x:C1}
  Prove that, under the assumptions of Theorem~\ref{thm:BNB}, if
  Statement~(\ref{item:bnb3}) of Theorem~\ref{thm:BNB} holds for some
  $C_1$, then Statement~(\ref{item:bnb2}) also holds with the same
  constant $C_1$. ({\em Hint:} Use~Exercise~\ref{x:Aadjinv} but do not
  forget that our spaces are over $\CCC$.)
\end{exercise}

\begin{definition}
  \label{def:energy-norm}
  Let $R_Y : Y \to Y^*$ denote the {\em Riesz map} defined by $ (R_Y
  y)(v) = (y,v)_Y, $ for all $ y$ and $v$ in $Y$. It is well known to
  be invertible and isometric:
  \begin{equation}
    \label{eq:RY}
    \| R_Y    y \|_{Y^*} = \| y \|_Y. 
  \end{equation}
  Let $B: X \to Y^*$ be the {\em operator generated by the form}
  $b(\cdot,\cdot)$, i.e., $Bx(y) = b(x,y)$ for all $x\in X$ and $y\in
  Y$.  By the definition of $T$ in~\eqref{eq:T}, it is obvious that
  \begin{equation}
    \label{eq:TB}
    T = R_Y^{-1} \circ B.
  \end{equation}
  Finally, for any $z \in X$, we define the {\em energy norm} of $z$
  by $ \ntrip{z }_X \defn \| T z\|_Y$.  Clearly, by
  Proposition~\ref{prop:optimalopti},
  \[
  \ntrip{z }_X  = \| T z\|_Y
  = \sup_{0\ne y \in Y} 
  \frac{| b(z,y)|} { \|y \|_Y}.
  \]
\end{definition}

\begin{exercise}
  Prove that if Assumption~\ref{asm:wellposed} holds, then
  $\ntrip{\cdot}_X$ and $\| \cdot\|_X$ are equivalent norms:
  \[
  C_1 \| z \|_X \le \ntrip{z}_X \le C_2 \| z \|_X \qquad \forall z\in X.
  \]
\end{exercise}

\begin{theorem}[Residual minimization]
  \label{thm:dpgleastsq}
  Suppose Assumption~\ref{asm:wellposed} holds and $x$
  solves~\eqref{eq:weakform}.  Then, \tfae\ statements:
  \begin{enumerate}[\quad i)\;]
  \item \label{item:dpgls-1} $x_h \in X_h$ is the unique solution of
    the ideal PG method~\eqref{eq:ipg}.
  \item \label{item:dpgls-2} $x_h$ is the best approximation to $x$
    from $X_h$ in the following sense:
    \[
    \ntrip{x-x_h}_X = \inf_{z_h\in X_h}\ntrip{ x - z_h}_X
    \]
  \item \label{item:dpgls-3} $x_h$ minimizes residual in the
    following sense:
    \[
    x_h = \aarg \min_{z_h \in X_h} \| \ell - B z_h \|_{Y^*}.
    \]
  \end{enumerate}
\end{theorem}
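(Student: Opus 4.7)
\medskip

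My plan is to prove the three-way equivalence by establishing (\ref{item:dpgls-2}) $\Leftrightarrow$ (\ref{item:dpgls-3}) directly from the definitions, and then (\ref{item:dpgls-1}) $\Leftrightarrow$ (\ref{item:dpgls-2}) via a Galerkin-orthogonality argument in a suitable inner product. The central observation tying everything together is that the trial-to-test operator $T$ is an isometry from $(X,\ntrip{\cdot}_X)$ into $(Y,\|\cdot\|_Y)$, by the very definition of the energy norm. Injectivity of $T$ (already noted in the proof of Theorem~\ref{thm:quasiopt}) is therefore crucial and is guaranteed by Assumption~\ref{asm:wellposed}.

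For (\ref{item:dpgls-2}) $\Leftrightarrow$ (\ref{item:dpgls-3}), the plan is to rewrite both quantities as distances to a common object. Since $x$ satisfies the weak form~\eqref{eq:weakform}, the operator $B$ from Definition~\ref{def:energy-norm} satisfies $Bx = \ell$, so by~\eqref{eq:TB}, $Tx = R_Y^{-1}\ell$. Using the isometry~\eqref{eq:RY},
\[
\|\ell - Bz_h\|_{Y^*} = \|R_Y^{-1}(\ell - Bz_h)\|_Y = \|Tx - Tz_h\|_Y = \ntrip{x - z_h}_X
\qquad\forall z_h \in X_h.
\]
The two minimization problems therefore have identical objective functions, so they have identical minimizers. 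This step is largely a routine unwinding of definitions.

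For (\ref{item:dpgls-1}) $\Leftrightarrow$ (\ref{item:dpgls-2}), the idea is that the energy norm comes from an inner product, namely $(z,w)_E := (Tz, Tw)_Y$, which by~\eqref{eq:T} equals $b(z, Tw)$. Since $T$ is injective, this is a genuine inner product on $X$ that induces $\ntrip{\cdot}_X$. The best-approximation property (\ref{item:dpgls-2}) is then equivalent to the orthogonality $(x - x_h, z_h)_E = 0$ for all $z_h \in X_h$, which expands to
\[
b(x - x_h, Tz_h) = 0 \qquad\forall z_h \in X_h.
\]
Because $\Yhopt = T(X_h)$, this is exactly $b(x_h, y_h) = b(x, y_h) = \ell(y_h)$ for all $y_h \in \Yhopt$, i.e., statement (\ref{item:dpgls-1}).

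The main obstacle, such as it is, lies not in any hard estimate but in identifying the right inner product on $X$ whose orthogonality relation corresponds to the ideal PG equation; once one sees that $(z,w)\mapsto (Tz,Tw)_Y$ does the job, every implication reduces to a one-line identity. Existence and uniqueness of $x_h$ in (\ref{item:dpgls-1}) is already supplied by Theorem~\ref{thm:quasiopt}, so no separate well-posedness argument is needed.
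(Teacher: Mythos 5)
Your proposal is correct and follows essentially the same route as the paper: the equivalence (ii) $\Leftrightarrow$ (iii) via $T = R_Y^{-1}B$, the isometry of the Riesz map, and $\ell = Bx$; and the equivalence (i) $\Leftrightarrow$ (ii) by recognizing $(z,w)\mapsto (Tz,Tw)_Y$ as the inner product generating $\ntrip{\cdot}_X$ and reducing the best-approximation property to the Galerkin orthogonality $b(x-x_h,Tz_h)=0$ for all $z_h\in X_h$. Your explicit remark that injectivity of $T$ is what makes this a genuine inner product is a point the paper leaves implicit, but the argument is the same.
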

\begin{proof}
  $(\ref{item:dpgls-1}) \iff~(\ref{item:dpgls-2}):$ 
  \begin{align*}
    x_h \text{ solves~\eqref{eq:ipg}} 
    & \iff b( x- x_h, y_h)=0  && \forall y_h \in \Yhopt
    \\
    & \iff b( x- x_h, Tz_h)=0 && \forall z_h \in X_h
    \\
    & \iff (T( x- x_h), Tz_h)_Y=0 && \forall z_h \in X_h,
  \end{align*}
  and the result follows since $(T\cdot,T\cdot)_Y$ is the inner
  product generating the $\ntrip{\cdot}_X$ norm.

  $(\ref{item:dpgls-2}) \iff~(\ref{item:dpgls-3}):$
  \begin{align*}
    \ntrip{x-x_h}_X = \inf_{z_h\in X_h}\ntrip{ x - z_h}_X
    & \iff \| T(x-x_h)\|_Y = \inf_{z_h\in X_h} \|T( x - z_h)\|_Y
    \\
    & \iff \| R_Y^{-1} B(x-x_h)\|_Y = \inf_{z_h\in X_h} \|R_Y^{-1} B( x - z_h)\|_Y,
    && \text{by~\eqref{eq:TB},}
    \\
    & \iff \| B(x-x_h)\|_{Y^*} = \inf_{z_h\in X_h} \| B( x - z_h)\|_{Y^*},
    && \text{by~\eqref{eq:RY},}
    \\
    & \iff \| \ell-Bx_h\|_{Y^*} = \inf_{z_h\in X_h} \| \ell - B z_h\|_{Y^*}
  \end{align*}
  since $\ell = Bx$. This proves the result.
\end{proof}

\begin{definition}
  \label{def:errorrep}
  Let $x$ solve~\eqref{eq:weakform} and $x_h$ solve~\eqref{eq:ipg}.
  We call $\veps = R_Y^{-1}(\ell - B x_h)$ the {\em error
    representation function}. Clearly, 
  \[
  \| \veps \|_Y = \| R_Y^{-1} B ( x- x_h) \|_Y = \| T(x-x_h)\|_Y = \ntrip{ x-x_h}_X,
  \]
  i.e., the $Y$-norm of $\veps$ measures the error in the energy
  norm. Note that $\veps$ is the unique element of $Y$ satisfying
  \[
  (\veps,y)_Y = \ell(y) - b(x_h,y), \qquad \forall y\in Y.
  \]
\end{definition}

\begin{theorem}[Mixed Galerkin formulation]
  \label{thm:mixed}
  \Tfae\ statements:
  \begin{enumerate}[\quad i)\;]
  \item \label{item:dpgmixed-1} $x_h \in X_h$ solves the ideal PG
    method~\eqref{eq:ipg}.
  \item \label{item:dpgmixed-2} $x_h$ and $\veps$ solves the mixed
    formulation
    \begin{subequations}
      \begin{align}
        \label{eq:6}
        (\veps,y)_Y + b(x_h, y) & = \ell( y) &&\forall y\in Y, \\
        \label{eq:7}
        b(z_h,\veps)            & = 0      &&\forall z_h \in X_h.
      \end{align}
    \end{subequations}
  \end{enumerate}
\end{theorem}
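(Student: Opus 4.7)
The plan is to prove the equivalence in both directions, using the error representation function from Definition~\ref{def:errorrep} as the bridge. The key observation is that the second equation~\eqref{eq:7} is precisely a Galerkin orthogonality condition expressing that $\veps \perp Y_h^{\mathrm{opt}}$ in the $Y$-inner product, since by definition~\eqref{eq:T} we have $b(z_h, \veps) = (Tz_h, \veps)_Y$ for all $z_h \in X_h$, and $Y_h^{\mathrm{opt}} = T(X_h)$.

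For the direction $(\ref{item:dpgmixed-1}) \Rightarrow (\ref{item:dpgmixed-2})$, I would take $\veps = R_Y^{-1}(\ell - Bx_h)$, the error representation function. Equation~\eqref{eq:6} then follows directly from the definition of $R_Y$: for any $y\in Y$,
\[
(\veps,y)_Y = (R_Y \veps)(y) = (\ell - Bx_h)(y) = \ell(y) - b(x_h,y).
\]
To verify~\eqref{eq:7}, I would use~\eqref{eq:T} to rewrite $b(z_h, \veps) = (Tz_h, \veps)_Y$. Testing~\eqref{eq:6} against $y = Tz_h \in Y_h^{\mathrm{opt}}$ gives $(\veps, Tz_h)_Y = \ell(Tz_h) - b(x_h, Tz_h) = 0$, where the last equality uses that $x_h$ solves the ideal PG method~\eqref{eq:ipg}.

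For the reverse direction $(\ref{item:dpgmixed-2}) \Rightarrow (\ref{item:dpgmixed-1})$, I would test~\eqref{eq:6} against an arbitrary $y_h = T z_h \in Y_h^{\mathrm{opt}}$, obtaining
\[
(\veps, T z_h)_Y + b(x_h, T z_h) = \ell(T z_h).
\]
The first term equals $b(z_h, \veps)$ by definition of $T$, which vanishes by~\eqref{eq:7}. Hence $b(x_h, y_h) = \ell(y_h)$ for all $y_h \in Y_h^{\mathrm{opt}}$, which is exactly~\eqref{eq:ipg}.

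I expect no serious obstacle here; the proof is essentially a bookkeeping exercise once one identifies the auxiliary variable in the mixed formulation with the error representation function. The only subtlety worth flagging is that the mixed formulation as stated does not explicitly require $\veps$ to be unique or to coincide with the error representation function a priori; one should note however that under Assumption~\ref{asm:wellposed}, $\veps$ is in fact uniquely determined by~\eqref{eq:6} given $x_h$, so the two formulations genuinely correspond.
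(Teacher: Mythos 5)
Your proof is correct and follows essentially the same route as the paper: identify $\veps$ with the error representation function, get~\eqref{eq:6} from the Riesz map, and obtain~\eqref{eq:7} and the reverse implication by testing with $y = Tz_h \in \Yhopt$ and using $b(z_h,\veps) = (Tz_h,\veps)_Y$. The only cosmetic point is that over $\CCC$ one has $(\veps,Tz_h)_Y = \overline{b(z_h,\veps)}$ rather than equality, but since the quantity in question is being shown to vanish this changes nothing.
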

\begin{proof}
  $(\ref{item:dpgmixed-1}) \implies~(\ref{item:dpgmixed-2}):$
  Since~\eqref{eq:6} holds by the definition of the error
  representation function, we only need to prove~\eqref{eq:7}. To this
  end, $b(z_h,\veps) = (Tz_h, \veps)_Y= (Tz_h, R_Y^{-1} (\ell - Bx_h))_Y
  = (Tz_h, T(x-x_h))_Y$, which being the conjugate of $b(x-x_h,
  Tz_h)$, vanishes.

  $(\ref{item:dpgmixed-2}) \implies~(\ref{item:dpgmixed-1}):$
  Since~\eqref{eq:6} implies $ b(x_h, y_h) = \ell(y_h) -
  (\veps,y_h)_Y$ for all $y_h \in \Yhopt$, it suffices to prove that
  $(\veps, y_h)_Y=0$ for all $y_h\in \Yhopt$: But this is obvious from 
  \begin{align*}
    (T z_h, \veps)_Y & = b( z_h,\veps) =0 &&\forall z_h \in X_h,
  \end{align*}
  which holds by virtue of~\eqref{eq:7}.
\end{proof}

To summarize the theory so far, we have shown that there are test
spaces that can pair with any given trial space to generate an ideal
Petrov-Galerkin method that is guaranteed to be stable. Moreover, the
discrete inf-sup constant of the method is the same as the
exact inf-sup constant. We then showed, in
Theorem~\ref{thm:dpgleastsq}, that the resulting methods are least
square methods that minimize the residual in a dual norm. Finally, we
also showed that the method can be interpreted as a (standard
Galerkin rather than a Petrov-Galerkin) mixed formulation on 
$Y\times X_h$, after introducing an error representation function.

\begin{definition}
  \label{def:iDPG}
  Suppose an open $\om \subset \RRR^N$ is partitioned into disjoint
  open subsets $K$ (called {\em elements}), forming the collection
  $\oh$ (called {\em mesh}), such that the union of $\bar K$ for all
  $K\in\oh$ is $\bar \om.$ Let $Y(K)$ denote a Hilbert space of some
  functions on $K$ with inner product $(\cdot,\cdot)_{Y(K)}$. An
  {\bf{\em ideal DPG method}} is an ideal PG method with
  \begin{equation}
    \label{eq:1}
    Y = \prod_{K\in\oh} Y(K), 
  \end{equation}
  endowed with the inner product 
  \[
  (y,v)_Y = \sum_{K\in \oh} (y|_K,v|_K)_{Y(K)} \qquad \forall y,v\in K,
  \]
  where $y|_K$ denotes the $Y(K)$-component of any $y$ in the product
  space $Y$.  For example, the space $Y=H^1(\om)$ is not of the
  form~\eqref{eq:1}, while $Y = H^1(\oh) \defn \{ v \in L^2(\om): v|_K
  \in H^1(K)$ for all $K\in\oh\}$ is of the form~\eqref{eq:1}.
\end{definition}

Such DPG methods are interesting due to the resulting
locality of $T$. Note that to compute a basis for the optimal test
space, we must solve~\eqref{eq:T} to compute $Tz$ for each $z$ in a
basis of $X_h$.  That equation, namely $(Tz,y)_Y = b(z,y)$, decouples
into independent equations on each element, if $Y$ has the
form~\eqref{eq:1}. Indeed, the component of $Tz$ on an element $K$,
say $t_K = Tz|_K$ can be computed (independently of other elements) by
solving $ (t_K, y_K)_{Y(K)} = b(z,y_K) $ for all $y_K\in Y(K).$ The
adjective {\bf{\em{discontinuous}}} in the name ``DPG'' should no longer
be a surprise since test spaces $Y$ of the form~\eqref{eq:1} admit
(discontinuous) functions with no continuity constraints across
element interfaces.

\section{Examples and Connections}

Although we presented the theory over $\CCC$ for generality (e.g., to
cover harmonic wave propagation), it continues to apply for
real-valued bilinear and linear forms (in place of sesquilinear and
conjugate-linear forms). All the examples in this section are over
$\RRR$.

\begin{example}[Standard FEM]
  \label{eg:std}
  Set 
  \begin{align*}
    (v,w)_Y & = \int_\om \grad v\cdot\grad w \; , 
    \quad
    X  = Y = H_0^1(\om), \quad    \| u \|_X^2 = \| u\|_Y^2 = (u,u)_Y,
  \end{align*}
  and consider the standard weak formulation of the Dirichlet
  problem: For any given $F\in H^{-1}(\om)$, find $u \in H_0^1(\om)$ solving 
  \[
  b(u,v) = F(v), \qquad \forall v \in H_0^1(\om),
  \]
  where $b(u,v) = (u,v)_Y$. Clearly, in this case, the trial-to-test
  operator $T$ is the identity map $I$ on $ X$. Hence, if we set $X_h$ to
  the standard Lagrange finite element subspace of $H_0^1(\om)$ based
  on a finite element mesh, we get $\Yhopt = X_h$. Thus {\em the
    standard finite element method uses an optimal test space.} Since
  the form is coercive, it obviously satisfies
  Assumption~\ref{asm:wellposed}, so the previously discussed theorems
  apply for this method.
\end{example}

\begin{example}[$L^2$-based least-squares method]
  \label{eg:L2}
  Suppose $X$ is a Hilbert space and $A: X \to L^2(\om)$ is a
  continuous bijective linear operator. Then setting $Y=L^2(\om)$, the
  problem of finding a $u\in X$ such that $Au = f$, for any given
  $f\in Y$, can be put into a variational formulation by setting
  \[
  b(u,v) = (Au, v)_Y, \qquad \ell(v) = (f,v)_Y.
  \]
  Then it is obvious that 
  $
  T u = A u,
  $
  so $\Yhopt = A X_h$.  It is also easy to verify that
  Assumption~\ref{asm:wellposed} holds: By the bijectivity of $A$,
  \begin{align*}
    \text{uniqueness: }
    &
    z\in X, b(z,y)=0 \;\forall\; y\in L^2(\om) \implies A z=0 \implies z=0,
    \\
    \text{inf-sup: }
    & \sup_{0\ne z\in X} \frac{ | (Az,y)_Y |}{\| z\|_X} \ge 
    \frac{ | (y,y)_Y |}{\| A^{-1} y\|_X} \ge C_1 \| y\|_Y,
  \end{align*}
  with $C_1 = \| A^{-1} \|^{-1}$.  Hence, Theorems~\ref{thm:quasiopt}
  and~\ref{thm:dpgleastsq} hold for this method.

  Finally, since $B = R_{L^2(\om)} A$ and $\ell = R_{L^2(\om)} f$, by
  the isometry of the Riesz map, the residual minimization property of
  Theorem~\ref{thm:dpgleastsq}(\ref{item:dpgls-3}) implies that for
  any trial subspace $X_h \subset X$,  we have
  \[
    x_h = \aarg \min_{z_h \in X_h} \| f  - A z_h \|_{L^2(\om)},
  \]
  i.e., {\em in this example, the ideal PG method coincides with the
    standard $L^2(\om)$-based least-squares method.}
\end{example}

\begin{example}[1D o.d.e.\ without integration by parts]
  \label{eg:odeWOip}
  Let $\om = (0,1)$, $f \in L^2(\om)$.  Consider the boundary value
  problem (where primes denote differentiation) to find $u(x)$ solving
  \begin{subequations}
    \label{eq:1Dode}
    \begin{align}
      \label{eq:1Dode-a}
      u' & = f && \text{ on } (0,1), \\
      \label{eq:1Dode-b}
      u(0) & = 0 && \text{(boundary condition at $x=0$)}.
    \end{align}
  \end{subequations}
  A Petrov-Galerkin variational formulation is immediately obtained by
  multiplying~\eqref{eq:1Dode-a} with an $L^2$ test function $v$ and
  integrating. The resulting forms are 
  \[
  b(u,v) = \int_0^1 u' v, \quad \ell(v) = \int_0^1 f v,
  \]
  and the spaces are 
  \begin{align*}
    X = \{ u \in H^1(0,1): \; u(0)=0\},
    \quad 
    Y=L^2(0,1).
  \end{align*}
  This now fits into the framework of Example~\ref{eg:L2}. (Indeed, the operator 
  \begin{equation}
    \label{eq:4}
    A u = u', \quad A:    X \to Y \text{ is a bijection,}
  \end{equation}
  because for any $f\in Y$, the function $u (x)= \int_0^x f(s)\; ds $
  is in $X$ and satisfies $Au =f$.) Hence, as already discussed in
  Example~\ref{eg:L2}, this method reduces to a standard $L^2$-based
  least-squares method.
\end{example}
 
\begin{example}[1D o.d.e.\ with integration by parts]
\label{eg:odeWIP}
  We consider the same boundary value problem as above,
  namely~\eqref{eq:1Dode}, but now develop a different variational
  formulation for it. Multiply~\eqref{eq:1Dode-a} by a test function
  $v \in C^1(\bar\om)$ and integrate by parts to get 
  \[
  - \int_0^1 u v' + u(1) v(1) - u(0) v(0)  = \int_0^1 f v
  \]
  Using~\eqref{eq:1Dode-b} and letting the unknown value $u(1)$ to be
  a separate variable $\hat u_1$, to be determined, we have derived the
  variational equation
  \[
  - \int_0^1 u v' + \hat u_1 v(1)  = \int_0^1 f v.
  \]
  We let the pair $(u,\hat u_1)$ to be a group variable $z$, and fix
  an appropriate functional setting.  Set the forms by
  \[
  b(z, v) \equiv  b( \,(u,\hat u_1), v) = 
  \hat u_1 v(1) - \int_0^1 u v', 
  \qquad \ell(v) = \int_0^1 f v,
  \]
  the spaces by 
  \[
  X = L^2(\om) \times \RRR,\quad Y = H^1(\om), \quad \text{where }\om=(0,1),
  \]
  and the norms by 
  \begin{align}
    \nonumber
    \| z \|_X^2 
    & \equiv \| (u,\hat u_1) \|_X^2 = \| u \|_{L^2(\om)}^2 + | \hat u_1|^2
    \\ \label{eq:8}
    \| v \|_Y^2 
    & = \| v'\|_{L^2(\om)}^2 + |v(1)|^2.
  \end{align}
  By Sobolev inequality, $v(1)$ makes sense for $v\in Y$, so the above
  set $b(\cdot,\cdot)$ and $\|\cdot\|_Y$ are well-defined.  In fact
  (by Exercise~\ref{x:norm-eq-1d}), the above set norm $\| v \|_Y$ is
  equivalent to the standard $H^1(\om)$ norm.

  Next, let us verify Assumption~\ref{asm:wellposed}. First, suppose
  $(u,\hat u_1)$ satisfies 
  \begin{equation}
    \label{eq:9}
      b(\, (u,\hat u_1), v)=0\quad \forall  v\in Y.
  \end{equation}
  Then, choosing $v \in \DD(\om)$, the set of infinitely
  differentiable compactly supported functions in $\om$, we find
  that the distributional derivative $u'$ vanishes. Hence $u\in
  H^1(\om)$. Going back to a general $v\in Y$, we may now integrate
  equation~\eqref{eq:9} by parts to obtain
  \begin{equation}
    \label{eq:5}
    -u(1) v(1) + u(0) v(0) +\hat u_1 v(1)
    =0.
  \end{equation}
  Choosing $v(x)=1-x$, we obtain $u(0)=0$. Thus, $u$
  solves~\eqref{eq:1Dode} with zero data, so $u\equiv 0$
  by~\eqref{eq:4}. From~\eqref{eq:5} we also have $u(1) = \hat u_1$,
  which together with $u\equiv 0$ implies
  \begin{equation}
    \label{eq:10}
    \hat u_1 =0, \qquad u =0.
  \end{equation}
  Thus the uniqueness part of Assumption~\ref{asm:wellposed}, $\{ z
  \in X: \;b(z, y)=0,\; \forall y\in Y\} = \{0\}$ holds.

  For the remaining parts, we begin by noting that by Cauchy-Schwarz
  inequality,
  \begin{align*}
    \sup_{z\in X} \frac{ |b( z, v) |^2 }{ \| z\|_X^2 } 
     & =
     \sup_{(u,\hat u_1)\in X}
     \frac{ \displaystyle{\left|
           \hat u_1 v(1)
           - \int_0^1 u v' \right|^2 }}
     { \| u\|_{L^2(\om)}^2 + | \hat u_1 |^2}
     \le 
     \sup_{(u,\hat u_1)\in X}
     \frac{ \displaystyle{
           \left( |\hat u_1|^2 + \| u \|_{L^2(\om)}^2\right) \| v \|_Y^2 }}
     { \| u\|_{L^2(\om)}^2 + | \hat u_1 |^2}
     =
     \| v \|_Y^2
  \end{align*}
  while on the other hand,  given any $v\in Y$, 
  choosing $u = -v'$ and $\hat u_1 = v(1)$, we get 
  \begin{align*}
    \sup_{z\in X} \frac{ |b( z, v) |^2 }{ \| z\|_X^2 } 
     & =
     \sup_{(u,\hat u_1)\in X}
     \frac{ \displaystyle{\left|
           \hat u_1 v(1) - \int_0^1 u v' \right|^2 }}
     { \| u\|_{L^2(\om)}^2 + | \hat u_1 |^2}
     \ge
     \frac{ \displaystyle{\left|
            | v(1)|^2  +  \int_0^1 | v'|^2 \right|^2 }}
     { \| -v'\|_{L^2(\om)}^2 + | v(1) |^2}
     =\| v\|_Y^2
  \end{align*}
  Thus Assumption~\ref{asm:wellposed} holds with $C_1=C_2=1$.

  We can now calculate the optimal test space (see
  Exercise~\ref{x:ode} below) for any given trial space. Let
  $P_p(\om)$ denote the space of polynomials of degree at most $p$ on
  $\om$. We experiment with
  \[
  X_h = P_p(\om) \times \RRR,
  \]
  i.e., the discrete solution $x_h = (u_h,\hat u_{1,h})$ has $u_h$ in
  $P_p(\om) \subset L^2(\om)$ and the point flux value approximation
  $\hat u_{1,h}$ in $\RRR$. The resulting method was implemented in
  \fenics\ (code can be downloaded from
  \href{http://web.pdx.edu/~gjay/pub/1Dpgode.py}{\underline{here}}).
  Collecting the results obtained with an $f$ corresponding to an
  exact solution with a sharp layer,
  \[
  u = \frac{ e^{M(x-1)} - e^{-M}} { 1 - e^{-M}},
  \]
  we obtain Figure~\ref{fig:spectralPG}.

  The first graph in Figure~\ref{fig:spectralPG} plots the exact
  solution $u$ and the computed $u_h$ for three values of $p$. We also
  implemented the method of Example~\ref{eg:odeWOip} and plotted the
  corresponding solutions in the next graph in
  Figure~\ref{fig:spectralPG}. Comparing, we find that the ideal PG
  method of the current example performs better than that of
  Example~\ref{eg:odeWOip}.  Finally, we also plotted the
  $L^2(\om)$-projections of the exact solution on $P_p(\om)$ in the
  last graph in Figure~\ref{fig:spectralPG}. Comparing the plots, the
  first and the third figures appear
  identical. Exercise~\ref{ex:proj1D} asks you to show that this is
  indeed the case.
  \begin{figure}
    \centering
    \includegraphics[width=\textwidth]{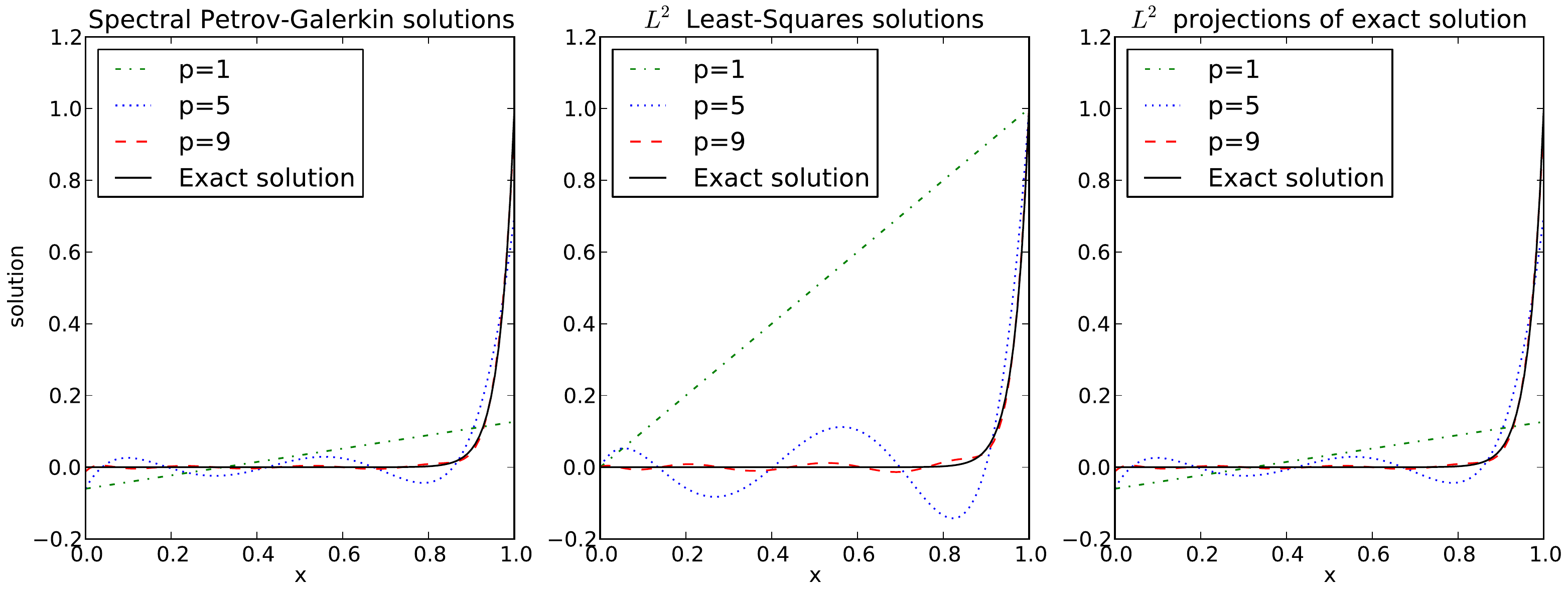}
    \caption{Solutions from one-element one-dimensional computations}
    \label{fig:spectralPG}
  \end{figure}
\end{example}

\begin{exercise}
  \label{x:norm-eq-1d}
  Prove that the norm defined in~\eqref{eq:8} is equivalent to the
  standard Sobolev norm defined by $\| v \|^2_{H^1(\om)} = \|
  v\|_{L^2(\om)}^2 + \| v'\|_{L^2(\om)}^2$. {\em Hint:} Use a Sobolev
  inequality and a \Poincare-type inequality.
\end{exercise}
\begin{exercise}
  \label{x:ode}
  Prove that in the setting of Example~\ref{eg:odeWIP}, an explicit
  formula for $T(u,\hat u_1)$ can be given for any $(u,\hat u_1) \in
  X$:
  \begin{equation}
    \label{eq:3}
    T (u,\hat u_1) = \hat u_1 + \int_x^1 u(s) \, ds.
  \end{equation}  
  Next, use~\eqref{eq:3} to prove that if
  $X_h = P_p(\om) \times \RRR$, then $\Yhopt = P_{p+1}(\om)$.
\end{exercise}
% \begin{proof}
%   $(Tz,y)_Y =
%   b(z,y)$ for $z=(u,\hat u_1)$ implies
%   \begin{align*}
%     \int_0^1 v' y' + v(1) y(1) = -\int_0^1  u y' + \hat u_1 y(1),
%     \quad \forall y\in H^1(0,1).
%   \end{align*}
%   We can solve this for $v$. Indeed, by inspection, we only need to
%   find a $v \in H^1(0,1)$ such that $v' = -u$ and $v(1) = \hat
%   u_1$, which can be accomplished by integrating $u$, i.e., 
%   \[
%   v(x)  \equiv T (u,\hat u_1) = \hat u_1 + \int_x^1 u(s) \, ds.
%   \]
%   Next, to prove that $T ( P_p(0,1) \times \RRR) = P_{p+1}(0,1)$,
%   since $\subset$ is obvious, let us only prove $\supset$: Given any
%   $q \in P_{p+1}(0,1)$, we split it as $q = q_1+ q_0$ where $q_1
%   \equiv q(1)$, a constant function, and $q_0 = q - q_1$. Then, set
%   $u= -q_0'$ and $\hat u_1 = q_1$ so that
%   \[
%   T(u,\hat u_1) = \hat u_1 + \int_x^1 u = q_1 -\int_x^1 q_0' = q_1 +
%   q_0(x) - q_0(1) = q.
%   \]
% \end{proof}

\begin{exercise}
  \label{ex:proj1D}
  Prove that the $u_h$ resulting from the ideal Petrov-Galerkin method
  of Example~\ref{eg:odeWIP} equals the $L^2(0,1)$ projection of $u$
  and that $\hat u_{1,h}=\hat u_1$. {\em Hint:} Apply
  Theorem~\ref{thm:quasiopt}.
\end{exercise}
% \begin{proof}
%   If $u$ solves~\eqref{eq:1Dode}, the exact solution of the
%   variational problem is $x = (u,\hat u_1)$ where $\hat
%   u_1=u(1)$. Letting the discrete solution be denoted by $x_h = (u_h,
%   \hat u_{1,h}) \in X_h$, Theorem~\ref{thm:quasiopt} implies
%   \[
%   \| u - u_h \|_{L^2(0,1)}^2 + | \hat u_1 - \hat u_{1,h} |^2 
%   \le \inf_{w_h \in P_p(0,1),\;\hat w \in \RRR}\; 
%   \| u - w_h \|_{L^2(0,1)}^2 + 
%   | \hat u - \hat w |^2.
%   \]
%   Choosing $\hat w = \hat u$, the last term vanishes. Hence, we obtain that 
%   \[
%   \| u - u_h \|_{L^2(0,1)}
%   = \inf_{w_h \in P_p(0,1),} \| u - w_h \|_{L^2(0,1)},
%   \]
%   i.e., the $u_h$-component of the solution is the best approximation
%   of $u$ from $P_p(0,1)$.
% \end{proof}

\begin{exercise}
  Suppose $\om=(0,1)$ is partitioned by the mesh $0=x_0<x_1<\cdots <
  x_m=1$.  Consider the method of Example~\ref{eg:odeWIP}, modified to
  use the different trial subspace $X_h=\{ u: u|_{(x_{i-1},x_i)}\in
  P_p(x_{i-1},x_i),$ for $i=1,\ldots, m\} \times \RRR$.  Show that $T$
  does not, in general, map locally supported trial functions to
  locally supported test functions, by exhibiting a $(u,\hu_1) \in X_h$
  such that $\supp(u) \subseteq [x_{i-1},x_i]$ for some $i$ but
  $\supp( T(u,\hu_1)) \not\subseteq [x_{i-1},x_i]$.
\end{exercise}

\begin{example}[An ideal DPG method]
  \label{eg:1d-IDPG}
  Continuing to consider~\eqref{eq:1Dode}, we now sketch how to extend
  the ideal PG scheme of Example~\ref{eg:odeWIP} to an ideal DPG
  scheme. Following the setting of Definition~\ref{def:iDPG}, we
  assume $\om=(0,1)$ is partitioned into $\oh$ consisting of $m$ intervals
  $(x_{i-1},x_i)$ for all $i=1,\ldots, m$, with $x_0=0$ and
  $x_m=1$. Let $v^\pm(x)$ denote the limiting value of $v$ at $x$ from
  the right and left, respectively. Set
  \begin{align*}
    Y & = H^1(\oh)= \{z\in L^2(\om) : z|_K\in H^1(K),\;\forall K\in\oh\}
    \\
    \| y\|_Y^2 & = \sum_{i=1}^m 
    \left(|y^-(x_i)|^2+\int_{x_{i-1}}^{x_i} |y'|^2 \right)
    \\
    X & = L^2(\om)\times \RRR^m
    \\
    \| (u,{\hat u}_1,{\hat u}_2,\ldots,{\hat u}_m)\|_X^2
    & = \| u\|_{L^2(\om)}^2 + |\hat u_1|^2 + |\hat u_2|^2 +\cdots+ |\hat u_m|^2
    \\
    \ell(y)
    & = (f,y)_{L^2(\om)}
    \\
    b(\,(u,{\hat u}_1,{\hat u}_2,\ldots,{\hat u}_m),\,y) 
    & =     \sum_{i=1}^m\left(
    {\hat u}_i y^-(x_i) - {\hat u}_{i-1} y^+(x_{i-1})-
    \int_{x_{i-1}}^{x_i} u y'\right),
  \end{align*}
  with the understanding that ${\hat u}_0=0$. Note that if $m=1$, then
  this reduces to the method of Example~\ref{eg:odeWIP}.  For general
  $m$, the action of the trial-to-test operator $T$ is local and can
  be computed element by element (see Exercise~\ref{x:T-idode}). The
  method for general $m$ can be analyzed as in Example~\ref{eg:odeWIP}
  (see Exercise~\ref{x:analyze1DIPG}).
\end{example}

\begin{exercise}
  \label{x:T-idode}
  Prove that, in the setting of Example~\ref{eg:1d-IDPG},
  \[
  T(0,\cdots,0,\hat u_i,0,\cdots,0)=\hat u_i \times 
  \left\{
  \begin{aligned}
    & 1
    &&\text{ if } x \in (x_{i-1},x_i) \\
    & x - x_{i+1} -1
    &&\text{ if } x \in (x_i,x_{i+1}) \\
    & 0            
    &&\text{ elsewhere.}
  \end{aligned}
\right.
  \]
\end{exercise}
% \begin{proof}
%   The function $v_i = T(0,\cdots,0,\hat u_i,0,\cdots,0)$, by
%   definition, satisfies
% \begin{align*}
%   \int_{x_{i-1}}^{x_i} v_i' \, \delta_v' \,+\,  v^-_i(x_i) \,\delta_v^-(x_i)
%   & = \hat u_i \delta_v^-(x_i) , && 
%   \forall  \delta_v \in H^1(x_{i-1},x_i),
%   \\
%   \int_{x_i}^{x_{i+1}} v_i' \, \delta_v' \,+\, v^-_i(x_{i+1}) \,\delta_v^-(x_{i+1})
% & =  -\hat u_i \delta_v^+(x_i) , &&\forall\, \delta_v \in H^1(x_{i},x_{i+1}),
% \end{align*}  
% so by inspection, plugging in the given formula for $v_i$, the result
% is proved (since we know the solution is unique).
% \end{proof}

\begin{exercise}
\label{x:analyze1DIPG}
Verify Assumption~\ref{asm:wellposed} for the formulation of
Example~\ref{eg:1d-IDPG}.
\end{exercise}

The process by which we extended the formulation of
Example~\ref{eg:odeWIP} to that in Example~\ref{eg:1d-IDPG} is an
instance of ``hybridization''. Variables like $\hat u_i$ in
Example~\ref{eg:1d-IDPG} are referred to by various names such as
facet, or inter-element, or {\bf{\em interface unknowns}}, and in
the DG community, by names like {\em numerical fluxes} or numerical traces.
To put the hybrid method in a more general PG context, we use the
abstract setting stated next.

\begin{assumption}  \label{asm:hybrid}
  Suppose $X$ takes the form $X_0 \times \hat X$ where $X_0$ and $\hX$
  are two Hilbert spaces and let the finite-dimensional subspace $X_h$
  have the form $\Xoh \times \hX_h$ with subspaces $\Xoh \subset X_0$
  and $\hX_h\subset \hX$.  Suppose there are continuous sesquilinear
  forms $\hat b(\cdot,\cdot): \hX \times Y \to \CCC$ and
  $b_0(\cdot,\cdot): X_0 \times Y \to \CCC$, in terms of which
  $b(\cdot,\cdot)$ is set by
  \[
  b( \,(u,\hu), y\,) = b_0( u, y) + \hb(\hu, y), 
  \]
for all $(u,\hu)\in X$ and $y\in Y$, and suppose 
\begin{equation}  \label{eq:21}
\Yo = \{ y\in Y : \hb(\hat u_h, y)=0,\; \forall \hat u_h
\in \hat X_h\}  
\end{equation}
is a closed subspace of $Y$. In addition to the already defined $T:
X\to Y$, define $T_0: X_0\to \Yo$ by $ (T_0 u, y)_Y = b_0(u,y)$, for
all $ y\in \Yo.$
\end{assumption}

\begin{subequations}
  \label{eq:2methods}
Under this setting, we consider two ideal PG methods:
\begin{align}
  \label{eq:15}
  \text{Find }(x_h,\hx_h)\in X_h:
  &\quad
  b(\,(x_h,\hx_h), y\,) = \ell(y) &&  \forall y\in \Yhopt \equiv T(X_h).
  \\
  \label{eq:16}
  \text{Find } x_h\in \Xoh:
  &\quad
  b_0(x_h, y) = \ell(y) &&  \forall y\in \Yohopt \equiv T_0(\Xoh).
\end{align}
\end{subequations}
The interest in the ``hybridized'' form~\eqref{eq:15} arises because,
when moving from $\Yo$ to $Y$, one can often obtain test spaces of the
form in Definition~\ref{def:iDPG}, which make $T$ local.  This will
become clearer in Example~\ref{eg:comp}, discussed after the next
theorem, and later in Example~\ref{eg:laplacehybrid}.

\begin{theorem}[Hybrid method]
  \label{thm:hybrid}
  Suppose Assumption~\ref{asm:hybrid} holds. Then, the test spaces
  in~\eqref{eq:2methods} satisfy $\Yohopt \subset\Yhopt$. Hence,
  \[
  (x_h,\hx_h) \in X_h\text{ solves~\eqref{eq:15}}
  \implies  x_h \text{ solves~\eqref{eq:16}.}   
  \]
\end{theorem}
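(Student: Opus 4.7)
My plan is to reduce everything to the key set inclusion $\Yohopt \subset \Yhopt$. Once this is in hand, the claimed implication on solutions is immediate: if $y \in \Yohopt \subset \Yhopt$, then~\eqref{eq:15} yields $\ell(y) = b_0(x_h, y) + \hb(\hx_h, y)$, and since $\Yohopt = T_0(\Xoh) \subset \Yo$, the defining property of $\Yo$ in~\eqref{eq:21} forces $\hb(\hx_h, y) = 0$, leaving $b_0(x_h, y) = \ell(y)$, which is precisely~\eqref{eq:16}.

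To establish the inclusion, I intend to argue via a short Hilbert-space orthogonality argument. The first step will be to introduce the finite-dimensional (hence closed) subspace $W = T(\{0\}\times\hX_h) \subset Y$ and to prove that $W^\perp = \Yo$. This is a direct computation: for $y \in Y$ and $\hat u \in \hX_h$, the definition~\eqref{eq:T} of $T$ gives $(y, T(0,\hat u))_Y = \overline{b((0,\hat u), y)} = \overline{\hb(\hat u, y)}$, so $y \in W^\perp$ if and only if $\hb(\hat u, y)=0$ for every $\hat u\in\hX_h$, which is exactly the defining condition of $\Yo$. Consequently $Y = W \oplus \Yo$ is an orthogonal decomposition.

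The second step is to preimage a given $T_0 u \in \Yohopt$ into $X_h$. Fix $u \in \Xoh$, decompose $T(u, 0) = w + y_0$ with $w \in W$ and $y_0 \in \Yo$, and pick any $\hat z \in \hX_h$ with $T(0, \hat z) = -w$; such a $\hat z$ exists because $-w$ lies in the range $W$ of the linear map $\hat u \mapsto T(0, \hat u)$. Using linearity of $T$, $T(u, \hat z) = T(u, 0) + T(0, \hat z) = y_0 \in \Yo$. Testing against any $y\in\Yo$ will yield $(T(u, \hat z), y)_Y = b_0(u, y) + \hb(\hat z, y) = b_0(u, y)$, since $\hat z \in \hX_h$ and $y \in \Yo$ annihilate the $\hb$-term. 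Thus $T(u, \hat z)$ is the Riesz representer in $\Yo$ of the functional $b_0(u, \cdot)$, which by definition is $T_0 u$. Hence $T_0 u = T(u, \hat z) \in T(X_h) = \Yhopt$.

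The main subtlety, and the only step requiring any real thought, is the identification $W^\perp = \Yo$, which converts the algebraic defining condition of $\Yo$ into a geometric orthogonality inside $Y$. After that, only \emph{existence} (not uniqueness) of the lift $\hat z$ is needed, so no extra non-degeneracy hypothesis on $\hb$ is required beyond what Assumption~\ref{asm:hybrid} already provides.
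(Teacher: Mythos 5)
Your argument is correct, and it takes a genuinely different route from the one in the notes, even though both hinge on the same identity $(T(0,\hu),y)_Y = \hb(\hu,y)$. The notes decompose $Y=\Yhopt\oplus Y_\perp$ with $Y_\perp = T(X_h)^\perp$, write a given $y_0=T_0u_h\in\Yohopt$ as $y_h+y_\perp$, and kill the orthogonal component in two steps (first $y_\perp\in\Yo$, then $\|y_\perp\|_Y^2=(y_0,y_\perp)_Y=b_0(u_h,y_\perp)=0$). You instead decompose $Y=W\oplus\Yo$ with $W=T(\{0\}\times\hX_h)$, after identifying $\Yo$ geometrically as $W^\perp$, and then produce an explicit lift: for each $u\in\Xoh$ you exhibit $\hat z\in\hX_h$ with $T(u,\hat z)=T_0u$, so that $T_0u\in T(X_h)$ by inspection. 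What your version buys is constructiveness --- it shows concretely which interface unknown corrects $T(u,0)$ into $\Yo$, which is a useful way to think about why hybridization preserves the non-hybrid optimal test functions --- and, as a small bonus, the closedness of $\Yo$ postulated in Assumption~\ref{asm:hybrid} comes for free from $\Yo=W^\perp$. What the notes' version buys is brevity: no lift needs to be constructed, only a projection shown to vanish. Your treatment of the final implication (restricting~\eqref{eq:15} to $y\in\Yohopt\subset\Yo$ so that the $\hb$-term drops) coincides with the notes, and your care with conjugation in $(y,T(0,\hu))_Y=\overline{\hb(\hu,y)}$ is exactly right for the complex setting.
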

\begin{proof}
  Since $\Yhopt$ is a closed subspace of $Y$, we have the orthogonal
  decomposition
  \begin{equation}
    \label{eq:17}
    Y = \Yhopt + Y_\perp
  \end{equation}
  where $Y_\perp$ is the $Y$-orthogonal complement of $\Yhopt$.  Let
  $y_0\in \Yohopt$. Apply~\eqref{eq:17} to decompose $y_0 = y_h +
  y_\perp$, with $y_h\in \Yhopt$ and $y_\perp \in Y_\perp$.

  First, we claim that $y_\perp\in \Yo$. This is because
  \begin{align*}
    \hb(\hu_h,y_\perp)=(T(0,\hu_h),y_\perp)_Y =0\qquad\forall \hu_h\in \hX_h.
  \end{align*}
  The last identity followed from the orthogonality of $y_\perp$ to
  $T(X_h)$.

  Next, we claim that $y_\perp=0$. It suffices to prove that
  $(y_0,y_\perp)_Y=0$ since $\| y_\perp\|_Y^2 =
  (y_0,y_\perp)_Y$. Since $y_0\in \Yohopt$, there is a $u_h\in X_h$
  such that $y_0=T_0u_h$. Then, 
  \begin{align*}
    (y_0,y_\perp)_Y
    & = (T_0 u_h, y_\perp)_Y  = b_0(u_h,y_\perp)
    && \text{as }y_\perp \in \Yo
    \\
    &=(T(u_h,0),y_\perp)_Y =0 
    && \text{as }T(X_h)  \perp y_\perp.
  \end{align*}
  Finally, since $y_\perp=0$, we have $y_0 = y_h+0\in \Yhopt.$ Thus
  $\Yohopt \subset \Yhopt$. The second statement of the theorem is now
  obvious by choosing $y\in \Yohopt$ in~\eqref{eq:15}.
\end{proof}

\begin{example}  \label{eg:comp}
  Set $\om=(0,1)$, $X_0=L^2(\om)\times \RRR, \hX=\hX_h=\RRR^{m-1},
  Y=H^1(\oh),$
  \begin{align*}
    b_0( \,(u,\hu_m), y)
    & =    
    {\hat u}_m y^-(1)  
    -\sum_{i=1}^m
      \int_{x_{i-1}}^{x_i} u y',
    \\
    \hb( \,(\hu_1,\ldots, \hu_{m-1}), y)
    & 
    =   
    \hu_1 y^-(x_1) -\hu_{m-1}y^+(x_{m-1}) + \sum_{i=2}^{m-1}\left(
    {\hat u}_i y^-(x_i) - {\hat u}_{i-1} y^+(x_{i-1})
  \right),
  \end{align*}
  Then, the method~\eqref{eq:15} yields the method of
  Example~\ref{eg:1d-IDPG}. It is easy to see that $ \Yo=H^1(\om)$.
  Hence the method~\eqref{eq:16} yields the method of
  Example~\ref{eg:odeWIP}. By Theorem~\ref{thm:hybrid}, the (global
  basis of) optimal test functions of Example~\ref{eg:odeWIP} can be
  expressed as a linear combination of the (local basis of) optimal
  test functions of Example~\ref{eg:1d-IDPG}.
\end{example}

% We have seen that the optimal test spaces give good stability
% constants.  But, in many examples, the functional setting is such that
% obtaining the optimal test space of~\eqref{eq:16} is a ``global''
% problem, making~\eqref{eq:16} an uncompetitive method. Then the saving
% grace is the hybrid form~\eqref{eq:15}, which often has a ``locally''
% computable optimal test space, yielding a competitive method with the
% same solution. These issues will become clearer when we discuss the
% example of the Laplacian in detail (in Section~\ref{sec:laplacian}).
% Theorem~\ref{thm:hybrid} also shows that the global optimal test
% functions are contained in the locally computable optimal test space.

\section{Inexact test spaces}

To compute the optimal test spaces, we need to apply $T$, which
requires solving~\eqref{eq:T}, typically an infinite-dimensional
problem. Although we have seen some examples where the action of $T$
can be computed in closed form, for the vast majority of interesting
boundary value problems, this is not feasible. Hence we are motivated
to substitute the optimal test functions by inexact (approximations
of) optimal test functions.

Let $Y^r$ denote a finite-dimensional subspace of $Y$ (with the index
$r$ related to its dimension.) Let $T^r: X \to Y^r$ be defined by $
(T^r w, y)_Y = b(w,y)$ for all $y \in Y^r$. In general, $T^r \ne T$.

\begin{definition}
  \label{def:DPG}
  A {\bf{\em DPG method}} for~\eqref{eq:weakform} uses a space $Y$ as
  in the ideal DPG method of Definition~\ref{def:iDPG},
  finite-dimensional subspaces $X_h \subset X$ and $Y^r \subset Y$,
  and computes $x_h$ in $X_h$ satisfying
    \begin{equation}
    \label{eq:pdpg}
      b({\xh},{y}) = \ell({y}),
      \qquad\forall {y} 
      \in Y_h^r \defn T^r(X_h).
  \end{equation}
  The DPG method is sometimes also called the ``practical'' DPG method,
  because it uses the inexact, but practically computable, test space
  $Y_h^r$ (in contrast to the ideal DPG method, which uses the exact
  optimal test space $\Yhopt$).
\end{definition}

\begin{assumption}
  \label{asm:Pi}
   There is a linear operator $\varPi: Y \to Y^r$ and a
  $C_\varPi>0$ such that for all $ {w_h}\in {X_h}$ and all $ v \in Y$,
  \[
    b(  w_h , v - {\varPi}  v) =0, \qquad \text{and} \qquad
    \| {\varPi}  v \|_Y \le C_{{\varPi}} \|  v \|_Y.
  \]
\end{assumption}

\begin{theorem}
  \label{thm:inex}
  Suppose Assumptions~\ref{asm:wellposed} and~\ref{asm:Pi} hold.  Then
  the DPG method~\eqref{eq:pdpg} is uniquely solvable for $x_h$ and
  \[
  \| x- x_h \|_X \le \frac{C_2C_\varPi}{C_1}
  \inf_{z_h\in X_h } \| x - z_h \|_X
  \]
  where $x$ is the unique exact solution of~\eqref{eq:weakform}.
\end{theorem}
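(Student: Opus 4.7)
The plan is to apply Theorem~\ref{thm:BNBapprox} to the discrete pair $(X_h, Y_h^r)$, so I need to verify two things: $\dim(X_h)=\dim(Y_h^r)$, and a discrete inf-sup condition for $b(\cdot,\cdot)$ on $X_h \times Y_h^r$. Once I identify the discrete inf-sup constant as $C_3 = C_1/C_\varPi$, the quasi-optimality bound with ratio $C_2/C_3 = C_2 C_\varPi/C_1$ will drop out of Theorem~\ref{thm:BNBapprox}.

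The heart of the argument is the discrete inf-sup. First I would mimic the proof of Proposition~\ref{prop:inf-sup-follows}, but with $T$ and $Y$ replaced by $T^r$ and $Y^r$: since $Y_h^r = T^r(X_h)$, the element $T^r z_h$ lies in $Y_h^r$ and attains the supremum, so
\[
\sup_{0\ne y_h\in Y_h^r}\frac{|b(z_h,y_h)|}{\|y_h\|_Y}
\;=\;\sup_{0\ne y\in Y^r}\frac{|b(z_h,y)|}{\|y\|_Y}
\;=\;\|T^r z_h\|_Y.
\]
Next I would invoke Assumption~\ref{asm:Pi} as a Fortin-type device: for any $v\in Y$ with $\varPi v\neq 0$,
\[
\sup_{0\ne y\in Y^r}\frac{|b(z_h,y)|}{\|y\|_Y}
\;\ge\; \frac{|b(z_h,\varPi v)|}{\|\varPi v\|_Y}
\;=\; \frac{|b(z_h,v)|}{\|\varPi v\|_Y}
\;\ge\; \frac{1}{C_\varPi}\cdot\frac{|b(z_h,v)|}{\|v\|_Y},
\]
where the equality uses the Galerkin-type orthogonality $b(z_h,v-\varPi v)=0$ and the last inequality uses $\|\varPi v\|_Y\le C_\varPi\|v\|_Y$; if $\varPi v=0$, then $b(z_h,v)=b(z_h,\varPi v)=0$ so such $v$ contribute nothing to the supremum on the right. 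Taking the supremum over $v\in Y$ and applying the continuous inf-sup inequality~\eqref{eq:dinfsup} (which holds with the same constant $C_1$ under Assumption~\ref{asm:wellposed} by Exercise~\ref{x:C1}) gives
\[
\sup_{0\ne y_h\in Y_h^r}\frac{|b(z_h,y_h)|}{\|y_h\|_Y}
\;\ge\; \frac{C_1}{C_\varPi}\,\|z_h\|_X,
\]
which is exactly~\eqref{eq:discrete-inf-sup} with $C_3 = C_1/C_\varPi$.

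The dimension equality then comes for free from the discrete inf-sup just established: it forces $T^r$ to be injective on $X_h$ (if $T^r z_h=0$ then $b(z_h,y)=(T^r z_h,y)_Y=0$ for all $y\in Y^r\supset Y_h^r$, so $z_h=0$), and combined with the surjectivity built into the definition $Y_h^r=T^r(X_h)$, this shows $\dim(X_h)=\dim(Y_h^r)$. Theorem~\ref{thm:BNBapprox} then yields unique solvability of~\eqref{eq:pdpg} and the claimed quasi-optimality estimate. I do not expect a serious technical obstacle here; all the real difficulty has been pushed into Assumption~\ref{asm:Pi}, whose $\varPi$ functions exactly like a Fortin operator. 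Constructing such a computable $\varPi$ with the required orthogonality and $Y$-stability is the genuinely hard part when one applies this theorem to a concrete DPG formulation.
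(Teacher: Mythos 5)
Your proposal is correct and follows essentially the same route as the paper: the chain $\sup_{Y_h^r}=\sup_{Y^r}\ge C_\varPi^{-1}\sup_{Y}\ge C_1C_\varPi^{-1}\|z_h\|_X$ via the Fortin-type use of $\varPi$ and Exercise~\ref{x:C1}, the dimension count from injectivity of $T^r$ on $X_h$, and the conclusion from Theorem~\ref{thm:BNBapprox} with $C_3=C_1/C_\varPi$. Your explicit handling of the case $\varPi v=0$ is a small point of care the paper's version glosses over.
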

\begin{proof}
  First, note that by Assumption~\ref{asm:Pi}, $T^r : X_h \to Y^r$ is
  injective: $T^r w_h =0$ for some $w_h\in X_h \implies b(w_h,y^r)=0$
  for all $y^r\in Y^r\implies b(w_h,\varPi y)=0$ for all $y\in Y
  \implies b(w_h,y) =0$ for all $y\in Y$, which by
  Assumption~\ref{asm:wellposed} implies that $w_h=0$. Thus,
  \[
    \dim( Y_h^r) = \dim(X_h).
  \]
  Next, for any $z_h \in X_h$, let
  \[
  s_0 = \sup_{0\ne y\in Y} \frac{ | b(z_h,y) |} {
    \| y \|_Y},
  \qquad 
  s_1 =  \sup_{0\ne y\in Y^r} \frac{ | b(z_h, y^r) |} {
    \|y^r \|_Y}, 
  \qquad 
  s_2 =
  \sup_{0\ne y\in Y_h^r} \frac{ | b(z_h, y_h^r) |} {
    \|y_h^r \|_Y}.
  \]
  The result will follow from Theorem~\ref{thm:BNBapprox} once we
  prove the discrete inf-sup condition 
  \begin{equation}
    \label{eq:24}
    C_1 C_\varPi^{-1} \| z_h \|_X
    \le s_2.
  \end{equation}

  We proceed to bound $\| z_h\|_X$ using $s_0$, then $s_1$, and
  finally $s_2$.  Assumption~\ref{asm:wellposed} implies (by
  Exercise~\ref{x:C1}) that the inf-sup condition
  \[
  C_1 \| z_h \|_X \le s_0
  \]
  holds.  Hence Assumption~\ref{asm:Pi} implies
  \begin{align*}
    C_1 \| z_h \|_X 
    & \le \sup_{0\ne y\in Y} \frac{ | b(z_h,y) |} {
            \| y \|_Y} 
     = \sup_{0\ne y\in Y} \frac{ | b(z_h,\varPi y) |} {
      \| y \|_Y}  &&\forall z_h\in X_h
    \\
    &\le  \sup_{0\ne y\in Y} \frac{ | b(z_h,\varPi y) |} {
      C_\varPi^{-1} \|\varPi y \|_Y}  
    \le  \sup_{0\ne y\in Y^r} \frac{ | b(z_h, y^r) |} {
      C_\varPi^{-1} \|y^r \|_Y}  
    &&\forall z_h\in X_h,
  \end{align*}
  i.e., we have proven the tighter inf-sup condition $C_1
  C_\varPi^{-1} \| z_h\|_X \le s_1$. To finish the proof
  of~\eqref{eq:24}, it only remains to tighten it further by proving
  that $s_1 \le s_2$. Analogous to Proposition~\ref{prop:optimalopti},
  $s_1$ is attained at $T^r z_h$, so
  \begin{align*}
    s_1 = 
    \frac{ (T^r z_h,T^rz_h )_Y}{ \| T^r z_h\|_Y}
    \le
    \sup_{0\ne y_h^r\in Y_h^r} \frac{ (T^r z_h, y_h^r)_Y}
    {\|y_h^r \|_Y}=s_2.
  \end{align*}
  This shows~\eqref{eq:24} and finishes the proof.
\end{proof}

\begin{remark}
  Although Theorem~\ref{thm:inex} has more hypotheses than 
  Theorem~\ref{thm:quasiopt},
  \[
  \text{Theorem~\ref{thm:inex}}\implies
  \text{Theorem~\ref{thm:quasiopt}.}
  \]
  Indeed, the ideal PG method is obtained by simply setting $Y^r =Y$,
  and in that case, the trivial operator $\varPi=I$ satisfies
  Assumption~\ref{asm:Pi} with $C_\varPi=1$. (Note that
  Theorem~\ref{thm:inex} holds if we use any closed subspace
  $Y^r\subset Y$ in~\eqref{eq:pdpg}, not only finite-dimensional
  $Y^r$.)
\end{remark}

\begin{exercise}[Necessity \& Sufficiency of Assumption~\ref{asm:Pi}]
  Suppose Assumption~\ref{asm:wellposed} holds.  If there is a
  $C_0>0$ such that for all $\ell \in (Y_h^r)^*$ there exists a
  unique $x_h \in X_h$ satisfying~\eqref{eq:pdpg} and moreover 
  \[
  \| x_h \|_X \le C_0  \| \ell \|_{(Y_h^r)^*},
  \]
  then the method~\eqref{eq:pdpg} is called
  {\em stable} and $C_0$ is the {\bf{\em{stability}}} constant of the
  method. Show that 
  \[
  \text{the method~\eqref{eq:pdpg} is stable}
  \iff \text{Assumption~\ref{asm:Pi} holds},
  \]
  and relate the stability constant to the other constants.
\end{exercise}
% \begin{proof}
%   $[\impliedby]$ By the proof of Theorem~\ref{thm:inex}, the inf-sup
%   condition~\eqref{eq:24} holds:
%   \[
%   C_1 C_\varPi^{-1}\| x_h\|_X 
%   \le
%   \sup_{0\ne y\in Y_h^r} \frac{ | b(x_h, y_h^r) |} {
%     \|y_h^r \|_Y}
%   =
%   \sup_{0\ne y\in Y_h^r} \frac{ | \ell(y_h^r)|} {
%     \|y_h^r \|_Y} =  \| \ell \|_{(Y_h^r)^*},
%   \]
%   so the result follows with $C_0 = C_1^{-1} C_\varPi$.

%   $[\implies]$ Let $B_h : X_h \to (Y_h^r)^*$ be defined by $(B_h
%   z_h)(y_h) = b(z_h,y_h)$ for all $z_h\in X_h$ and $y_h\in
%   Y^r_h$. Then, the given stability condition implies that $B_h$ is
%   invertible, and moreover, for any $z_h \in X_h$, 
%   \begin{equation}
%     \label{eq:33}
%     C_0^{-1} \| z_h \|_X \le \| B_h z_h \|_{(Y_h^r)^*} = 
%       \sup_{ y_h \in Y_h^r} \frac{ |b(z_h,y_h)| }{ \| y_h \|_Y }.
%   \end{equation}
%   Since $\dim(X_h)=\dim((Y_h^r)^*) <\infty$, the other inf-sup
%   condition holds with the same constant:
%   \[
%     C_0^{-1} \| y_h\|_Y \le 
%     \sup_{ z_h \in X_h} \frac{ |b(z_h,y_h)| }{ \| z_h \|_X }.
%   \]
%   Therefore, for any $y \in Y$, there is a unique $y_h\in Y_h^r$
%   satisfying the ``adjoint'' problem $ b(z_h, y_h) = b( z_h, y)$ for
%   all $z_h \in X_h, $ and moreover $ C_0^{-1} \| y_h\|_Y\le C_2
%   \|y\|_Y.  $ We define $\varPi y $ to be this $y_h$.  Clearly
%   $\|\varPi y \|_Y\le C_2 C_0\| y\|_Y$ and $b(X_h, (I-\varPi) Y)=0$.
% \end{proof}

\begin{definition}[cf.~Definition~\ref{def:energy-norm}]
  Let $\ntrip{x}_r \defn \| T^r x\|_Y$.  Let $R_{Y^r} : {Y^r} \to
  {(Y^r)}^*$ be the Riesz map defined by $ (R_{Y^r} y)(v) = (y,v)_Y, $
  for all $ y$ and $v$ in ${Y^r}$.  By the definition of $T^r$, it is
  easy to see that
  \begin{equation}
    \label{eq:11}
    T^rw= R_{Y^r}^{-1} B w.
  \end{equation}
\end{definition}

\begin{theorem}[cf.~Theorem~\ref{thm:dpgleastsq}]
  \label{thm:pdpgleastsq}
  Suppose Assumptions~\ref{asm:wellposed} and~\ref{asm:Pi} hold and
  let $x$ solve~\eqref{eq:weakform}.  Then, \tfae\ statements:
  \begin{enumerate}[\quad i)\;]
  \item \label{item:pdpgls-1} $x_h \in X_h$ is the unique solution of
    the DPG method~\eqref{eq:pdpg}.
  \item \label{item:pdpgls-2} $x_h$ is the best approximation to $x$
    from $X_h$ in the following sense:
    \[
    \ntrip{x-x_h}_r = \inf_{z_h\in X_h}\ntrip{ x - z_h}_r
    \]
  \item \label{item:pdpgls-3} $x_h$ minimizes residual in the
    following sense:
    \[
    x_h = \aarg \min_{z_h \in X_h} \| \ell - B z_h \|_{{(Y^r)^*}}.
    \]
  \end{enumerate}
\end{theorem}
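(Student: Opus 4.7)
The plan is to run the proof of Theorem~\ref{thm:dpgleastsq} almost verbatim, replacing $T$ by $T^r$, $\Yhopt$ by $Y_h^r$, and the Riesz map $R_Y$ by $R_{Y^r}$ throughout. Unique solvability of~\eqref{eq:pdpg} and injectivity of $T^r$ on $X_h$ are both already delivered by Theorem~\ref{thm:inex}, so what I need to verify are the two equivalences.

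For $(\ref{item:pdpgls-1}) \iff (\ref{item:pdpgls-2})$, I would chain
\begin{align*}
  x_h \text{ solves~\eqref{eq:pdpg}}
  & \iff b(x - x_h, y_h) = 0 && \forall y_h \in Y_h^r \\
  & \iff b(x - x_h, T^r z_h) = 0 && \forall z_h \in X_h \\
  & \iff (T^r(x - x_h), T^r z_h)_Y = 0 && \forall z_h \in X_h,
\end{align*}
invoking the defining identity of $T^r$ at the last step. Since $T^r$ is injective on $X_h$, the form $(T^r\cdot, T^r\cdot)_Y$ restricts to a genuine inner product on $X_h$, and the orthogonality on the last line is exactly the condition that $x_h$ be the best approximation to $x$ from $X_h$ in the $\ntrip{\cdot}_r$-(semi)norm, which is precisely~(\ref{item:pdpgls-2}).

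For $(\ref{item:pdpgls-2}) \iff (\ref{item:pdpgls-3})$, I would reuse the Riesz-map calculation from Theorem~\ref{thm:dpgleastsq}. By~\eqref{eq:11} and the isometry of $R_{Y^r}$ between $Y^r$ and $(Y^r)^*$,
\[
\ntrip{x - z_h}_r = \|T^r(x - z_h)\|_Y = \|R_{Y^r}^{-1} B(x - z_h)\|_Y = \|B(x - z_h)\|_{(Y^r)^*} = \|\ell - B z_h\|_{(Y^r)^*},
\]
where $B w$ is interpreted as its restriction from $Y^*$ to $(Y^r)^*$ and $\ell = B x$ is used at the last equality. Taking the infimum over $z_h \in X_h$ at the two ends transfers~(\ref{item:pdpgls-2}) into~(\ref{item:pdpgls-3}).

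The only conceptual nuance, rather than a genuine obstacle, is that $\ntrip{\cdot}_r$ is in general only a seminorm on the whole of $X$ (since $T^r$ need not be injective on all of $X$), in contrast to the ideal energy norm of Definition~\ref{def:energy-norm} which, under Assumption~\ref{asm:wellposed}, is a genuine norm on $X$. The orthogonality argument above, however, only requires an inner-product structure on the finite-dimensional subspace $X_h$, which is guaranteed by injectivity of $T^r|_{X_h}$; so the subtlety never actually bites, and no step of the proof poses real difficulty.
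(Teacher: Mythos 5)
Your proposal is correct and follows exactly the route the paper intends: its own proof of Theorem~\ref{thm:pdpgleastsq} consists of the instruction to repeat the proof of Theorem~\ref{thm:dpgleastsq} with $T^r$, $Y_h^r$, and $R_{Y^r}$ in place of $T$, $\Yhopt$, and $R_Y$, using~\eqref{eq:11} instead of~\eqref{eq:TB}, which is precisely what you carry out. Your added remark that $\ntrip{\cdot}_r$ is only a seminorm on $X$ but that the argument needs an inner product only on $X_h$ (where $T^r$ is injective by Assumption~\ref{asm:Pi}) is a correct and worthwhile clarification the paper leaves implicit.
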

\begin{proof}
  Follow along the lines of proof of Theorem~\ref{thm:dpgleastsq} but
  use~\eqref{eq:11} instead of~\eqref{eq:TB}.
\end{proof}
  % $(\ref{item:dpgls-1}) \iff~(\ref{item:dpgls-2}):$ 
  % \begin{align*}
  %   x_h \text{ solves~\eqref{eq:ipg}} 
  %   & \iff b( x- x_h, y_h)=0  && \forall y_h \in Y_h^r
  %   \\
  %   & \iff b( x- x_h, T^rz_h)=0 && \forall z_h \in X_h
  %   \\
  %   & \iff (T^r( x- x_h), T^rz_h)_Y=0 && \forall z_h \in X_h.
  % \end{align*}

  % $(\ref{item:dpgls-2}) \iff~(\ref{item:dpgls-3}):$
  % \begin{align*}
  %   \ntrip{x-x_h}_r = \inf_{z_h\in X_h}\ntrip{ x - z_h}_r
  %   & \iff \| T^r(x-x_h)\|_Y = \inf_{z_h\in X_h} \|T^r( x - z_h)\|_Y
  %   \\
  %   & \iff \| R_{Y^r}^{-1} B(x-x_h)\|_Y = 
  %   \inf_{z_h\in X_h} \|R_{Y^r}^{-1} B( x - z_h)\|_Y,
  %   && \text{by~\eqref{eq:11},}
  %   \\
  %   & \iff \| B(x-x_h)\|_{{(Y^r)^*}} = 
  %   \inf_{z_h\in X_h} \| B( x - z_h)\|_{{(Y^r)^*}},
  %   \\
  %   & \iff \| \ell-Bx_h\|_{{(Y^r)^*}} = \inf_{z_h\in X_h} \| \ell - B z_h)\|_{{(Y^r)^*}}
  % \end{align*}
  % since $\ell = Bx$. This proves the result.

\begin{definition}[cf.~Definition~\ref{def:errorrep}]
  \label{def:estimator}
  Let $x$ solve~\eqref{eq:weakform}.  We call $\veps^r =
  R_{Y^r}^{-1}(\ell - B x_h)$ the {\em error estimator} of an $x_h$ in
  $X_h$. It is easy to see that it is the unique element of $Y^r$
  satisfying $ (\veps^r,y)_Y = \ell(y) - b(x_h,y), $ for all $y \in
  Y^r$.
\end{definition}

\begin{theorem}[cf.~Theorem~\ref{thm:mixed}]
  \label{thm:pdggmixed}
  \Tfae\ statements:
  \begin{enumerate}[\quad i)\;]
  \item \label{item:dpgmixed-1} $x_h \in X_h$ solves the DPG
    method~\eqref{eq:pdpg}.
  \item \label{item:dpgmixed-2} $x_h\in X_h$ and $\veps^r\in Y^r$
    solve the mixed formulation
    \begin{subequations}
      \label{eq:mixeddpg}
      \begin{align}
        \label{eq:22}
        (\veps^r,y)_Y + b(x_h, y) & = \ell( y) &&\forall y\in Y^r, \\
        \label{eq:23}
        b(z_h,\veps^r)            & = 0      &&\forall z_h \in X_h.
      \end{align}
    \end{subequations}
  \end{enumerate}
\end{theorem}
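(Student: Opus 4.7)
The plan is to follow the proof of Theorem~\ref{thm:mixed} almost verbatim, replacing the ideal trial-to-test operator $T$ with the inexact operator $T^r$ and the error representation $\veps$ with the error estimator $\veps^r$, and using the identity $T^r = R_{Y^r}^{-1}\circ B$ from~\eqref{eq:11} in place of~\eqref{eq:TB}. Since the test space for the DPG method is $Y_h^r = T^r(X_h)$, the same reduction ``test against $T^r z_h$ for $z_h\in X_h$'' that worked in the ideal case will again serve as the bridge between the PG equation and the mixed system.

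For the direction (\ref{item:dpgmixed-1})$\implies$(\ref{item:dpgmixed-2}), I would first observe that~\eqref{eq:22} is nothing other than the defining relation of $\veps^r$ in Definition~\ref{def:estimator}, so it holds automatically (independent of whether $x_h$ solves the DPG method). The substantive content is~\eqref{eq:23}. For that, I would write, for $z_h\in X_h$,
\[
b(z_h,\veps^r) = (T^r z_h,\veps^r)_Y
\]
using the definition of $T^r$. Now $T^r z_h\in Y_h^r$, so by the DPG equation~\eqref{eq:pdpg}, $b(x_h,T^r z_h)=\ell(T^r z_h)$; combined with the Riesz characterization $\veps^r = R_{Y^r}^{-1}(\ell - Bx_h)$, this gives $(T^r z_h,\veps^r)_Y = \ell(T^r z_h) - b(x_h,T^r z_h) = 0$. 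Taking conjugates (noting the sesquilinearity of $(\cdot,\cdot)_Y$ over $\CCC$), we conclude $b(z_h,\veps^r)=0$, which is~\eqref{eq:23}.

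For the converse direction (\ref{item:dpgmixed-2})$\implies$(\ref{item:dpgmixed-1}), I would take an arbitrary $y \in Y_h^r$ and write it as $y = T^r z_h$ for some $z_h\in X_h$ (which is possible by the definition $Y_h^r = T^r(X_h)$). Substituting this $y$ into~\eqref{eq:22} yields
\[
b(x_h, T^r z_h) + (\veps^r, T^r z_h)_Y = \ell(T^r z_h).
\]
The inner-product term equals the conjugate of $(T^r z_h,\veps^r)_Y = b(z_h,\veps^r)$, which vanishes by~\eqref{eq:23}. Hence $b(x_h,T^r z_h) = \ell(T^r z_h)$, and ranging $z_h$ over $X_h$ gives~\eqref{eq:pdpg}.

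There is no real obstacle here; the only point requiring a moment of care is keeping track of the order of arguments in $(\cdot,\cdot)_Y$ and the fact that our forms are sesquilinear over $\CCC$, so an equality $b(z_h,\veps^r)=0$ and its conjugate $(\veps^r, T^r z_h)_Y=0$ must be interchanged at one step. Note also that we do not actually need Assumption~\ref{asm:Pi} or Assumption~\ref{asm:wellposed} for the equivalence itself (the statement does not invoke them), since we are not asserting existence/uniqueness of $x_h$ here but only the equivalence of the two problem formulations; the Riesz representation theorem applied in $Y^r$ is the only well-posedness fact used, and it is unconditional.
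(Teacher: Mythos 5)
Your proof is correct and follows essentially the same route as the paper, which itself proves this theorem by repeating the argument of Theorem~\ref{thm:mixed} with $T$, $\veps$, and~\eqref{eq:TB} replaced by $T^r$, $\veps^r$, and~\eqref{eq:11}. Your observations that~\eqref{eq:22} is just the defining relation of $\veps^r$, that the conjugation step is where sesquilinearity must be tracked, and that neither Assumption~\ref{asm:wellposed} nor Assumption~\ref{asm:Pi} is needed for the equivalence are all consistent with the paper's treatment.
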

\begin{proof}
  Follow along the lines of the proof of Theorem~\ref{thm:mixed}.
\end{proof}
  % 
  % $(\ref{item:dpgmixed-1}) \implies~(\ref{item:dpgmixed-2}):$
  % Since~\eqref{eq:22} holds by the definition of the error
  % representation function, we only need to prove~\eqref{eq:23}. To
  % this end, $b(z_h,\veps^r) = (T^rz_h, \veps^r)_Y= (T^rz_h, R_{Y^r}^{-1}
  % (\ell - Bx_h))_Y = (T^rz_h, T^r(x-x_h))_Y$, which being the
  % conjugate of $b(x-x_h, T^rz_h)$, vanishes.

  % $(\ref{item:dpgmixed-2}) \implies~(\ref{item:dpgmixed-1}):$
  % Since~\eqref{eq:22} implies $ b(x_h, y_h) = \ell(y_h) -
  % (\veps^r,y_h)_Y$ for all $y_h \in Y_h^r$, it suffices to prove that
  % $(\veps^r, y_h)_Y=0$ for all $y_h\in Y_h^r$: But this is obvious from 
  % \begin{align*}
  %   (T^r z_h, \veps^r)_Y & = b( z_h,\veps^r) =0 &&\forall z_h \in X_h,
  % \end{align*}
  % which holds by virtue of~\eqref{eq:23}.

\begin{exercise}
  \label{x:orth}
  Prove that $\veps^r$ is $Y$-orthogonal to $Y_h^r$.
\end{exercise}
% \begin{proof}
%   Setting $y\in Y_h^r$ in~\eqref{eq:22}, the result immediately
%   follows from~\eqref{eq:pdpg}.
% \end{proof}

Next, recall the setting of Assumption~\eqref{asm:hybrid} with $ b(
\,(u,\hu), y\,) = b_0( u, y) + \hb(\hu, y).$ Analogous
to~\eqref{eq:21}, define
\begin{equation}
  \label{eq:30}
  Y_0^r = \{ y\in Y^r : \; \hb(\hat u_h, y)=0,\; \forall \hat u_h
  \in \hat X_h\}
\end{equation}
and let $T_0^r: X_0\to Y_0^r$ be defined by $(T_0^r u, y)_Y = b_0(u,y)$
for all $y\in Y_0^r.$ Then consider the corresponding DPG methods:
\begin{subequations}
\begin{align}
  \label{eq:26}
  \text{Find }(x_h,\hx_h)\in X_h:
  &\quad
  b(\,(x_h,\hx_h), y\,) = \ell(y) &&  \forall y\in Y_h^r \equiv T^r(X_h).
  \\
  \label{eq:27}
  \text{Find } x_h\in \Xoh:
  &\quad
  b_0(x_h, y) = \ell(y) &&  \forall y\in \Yohr \equiv T_0^r(\Xoh).
\end{align}
\end{subequations}

\begin{theorem}[cf.~Theorem~\ref{thm:hybrid}]
  \label{thm:pdpghybrid}
  Suppose Assumption~\ref{asm:hybrid} holds.
Then, the test spaces satisfy
  $\Yohr \subset Y_h^r$. Hence,
  \[
  (x_h,\hx_h) \in X_h\text{ solves~\eqref{eq:26}}
  \implies  x_h \text{ solves~\eqref{eq:27}.}   
  \]
\end{theorem}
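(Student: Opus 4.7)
The plan is to mimic the proof of Theorem~\ref{thm:hybrid} step by step, replacing the infinite-dimensional spaces $Y$, $\Yhopt$, $\Yohopt$, $\Yo$, $T$, and $T_0$ with their practical counterparts $Y^r$, $Y_h^r$, $\Yohr$, $Y_0^r$, $T^r$, and $T_0^r$. The inner product used for the orthogonal decomposition will still be $(\cdot,\cdot)_Y$, but all decompositions happen inside the finite-dimensional space $Y^r$. Once $\Yohr \subset Y_h^r$ is proved, the implication about solutions is essentially a one-line restriction argument.

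First, I would fix $y_0 \in \Yohr$ and write $y_0 = y_h + y_\perp$, where the decomposition is with respect to the $Y$-orthogonal splitting $Y^r = Y_h^r + Y_\perp^r$ (this is legitimate since $Y_h^r$ is a finite-dimensional, hence closed, subspace of $Y^r$). Then the task is to prove $y_\perp = 0$, exactly as in Theorem~\ref{thm:hybrid}. The first claim is that $y_\perp \in Y_0^r$: for any $\hu_h \in \hX_h$, use the defining identity of $T^r$ to write $\hb(\hu_h, y_\perp) = b((0,\hu_h), y_\perp) = (T^r(0,\hu_h), y_\perp)_Y$, which vanishes because $T^r(0,\hu_h)\in Y_h^r$ and $y_\perp$ is orthogonal to $Y_h^r$ inside $Y^r$. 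By the definition~\eqref{eq:30} of $Y_0^r$, this gives $y_\perp \in Y_0^r$.

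The second claim is $y_\perp = 0$. Since $y_h \perp y_\perp$, we have $\| y_\perp \|_Y^2 = (y_0, y_\perp)_Y$, so it suffices to show the right-hand side vanishes. Writing $y_0 = T_0^r u_h$ for some $u_h \in \Xoh$ and using $y_\perp \in Y_0^r$ (from the previous step), the defining identity of $T_0^r$ gives $(y_0, y_\perp)_Y = b_0(u_h, y_\perp)$. Rewriting this as $b((u_h,0), y_\perp) = (T^r(u_h,0), y_\perp)_Y$ via the definition of $T^r$, and using again that $T^r(u_h,0)\in Y_h^r$ is orthogonal to $y_\perp$, we conclude $\| y_\perp \|_Y^2 = 0$, hence $y_0 = y_h \in Y_h^r$. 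This proves $\Yohr \subset Y_h^r$.

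For the consequence, restricting the test $y$ in~\eqref{eq:26} to $y\in \Yohr \subset Y_h^r$ gives $b_0(x_h,y) + \hb(\hx_h,y) = \ell(y)$. Since $\Yohr \subset Y_0^r$ and $\hx_h \in \hX_h$, the defining property of $Y_0^r$ annihilates $\hb(\hx_h,y)$, leaving exactly~\eqref{eq:27}. The main (minor) obstacle is just to double-check that at each step where I invoke the defining identity of $T^r$ or $T_0^r$, the test function $y_\perp$ actually lies in the admissible set of tests: $y_\perp \in Y^r$ for $T^r$ (immediate from the decomposition), and $y_\perp \in Y_0^r$ for $T_0^r$ (exactly what the first claim gives). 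No new ingredients beyond those in the proof of Theorem~\ref{thm:hybrid} are needed.
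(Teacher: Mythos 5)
Your proof is correct and is exactly the argument the paper intends: the paper's proof simply says to repeat the proof of Theorem~\ref{thm:hybrid} with $Y$ replaced by $Y^r$ and $Y_\perp$ by the orthogonal complement of $Y_h^r$ in $Y^r$, which is precisely what you carried out. The details you checked (that $y_\perp\in Y^r$ so the defining identity of $T^r$ applies, and that $y_\perp\in Y_0^r$ so the defining identity of $T_0^r$ applies) are the right things to verify and are all in order.
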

\begin{proof}
  Proceed as in the proof of Theorem~\ref{thm:hybrid}, after replacing
  $Y$ by $Y^r$, and $Y_\perp$ by the orthogonal complement of
  $Y_h^r$ in $Y^r$.
\end{proof}
  % Since $Y_h^r$ is a closed subspace of $Y$, we have the orthogonal
  % decomposition
  % \begin{equation}
  %   \label{eq:17}
  %   Y^r = Y_h^r + Y_\perp^r
  % \end{equation}
  % where $Y_\perp^r$ is the $Y$-orthogonal complement of $Y_h^r$ in
  % $Y^r$.  Let $y_0\in \Yohr$. Apply~\eqref{eq:17} to decompose $y_0 =
  % y_h + y_\perp$, with $y_h\in Y_h^r$ and $y_\perp \in Y_\perp^r$.

  % First, we claim that $y_\perp\in Y_0^r$. This is because
  % \begin{align*}
  %   \hb(\hu_h,y_\perp)=(T^r(0,\hu_h),y_\perp)_Y =0\qquad\forall \hu_h\in \hX_h.
  % \end{align*}
  % The last identity followed from the orthogonality of $y_\perp$ to
  % $T^r(X_h,\hX_h)$.

  % Next, we claim that $y_\perp=0$. It suffices to prove that
  % $(y_0,y_\perp)_Y=0$ since $\| y_\perp\|_Y^2 =
  % (y_0,y_\perp)_Y$. Since $y_0\in \Yohr$, there is a $u_h\in X_h$
  % such that $y_0=T^r_0u_h$. Then, 
  % \begin{align*}
  %   (y_0,y_\perp)_Y
  %   & = (T^r_0 u_h, y_\perp)_Y  = b_0(u_h,y_\perp)
  %   && \text{as }y_\perp \in Y_0^r
  %   \\
  %   &=(T^r(u_h,0),y_\perp)_Y =0 
  %   && \text{as }T^r(X_h,\hX_h)  \perp y_\perp.
  % \end{align*}
  % Finally, since $y_\perp=0$, we have $y_0 = y_h+0\in Y_h^r.$ Thus
  % $\Yohr \subset Y_h^r$. The second statement of the theorem is now
  % obvious by choosing $y\in \Yohr$ in~\eqref{eq:26}.

\begin{remark}[Some ways to implement DPG methods]\hfill
  \label{rem:impl}
  \begin{enumerate}
  \item \label{item:Amat} Choose a local basis for $X_h$, say
    $e_j$. Compute $v_i = T^re_i$ (usually precomputed on a fixed
    reference element and mapped to physical elements).  Then assemble
    the {\em square} matrix 
    \begin{equation}
      \label{eq:29}
      A_{ij} = b(e_j,v_i) 
    \end{equation}
    by usual finite element techniques and solve.
  \item \label{item:Bmat} Let $e_j$ be as in~item~(\ref{item:Amat})
    and additionally select a local basis for $Y^r$, say $y_i$.
    Assemble the {\em rectangular} (since $\dim X_h \le \dim Y^r$
    typically) matrix
    \[
    B_{ij} = b(e_j,y_i)
    \]
    and the (block-diagonal) Gram matrix $M_{lm} =
    (y_l,y_m)_Y$. (Again, their assembly can be done by precomputing
    element matrices on a fixed reference element and mapping to
    physical elements.) Then form the square matrix
    $
    A = B^t M^{-1} B.
    $
    It is easy to see that this matrix equals~\eqref{eq:29}, so we proceed 
    as in item~(\ref{item:Amat}).
  
  \item Let $e_j$ and $y_i$ be as in item~(\ref{item:Bmat}). Assemble
    the matrices of~\eqref{eq:mixeddpg} and
    solve. Since~\eqref{eq:mixeddpg} is a standard Galerkin
    formulation, not a Petrov-Galerkin formulation, this technique
    requires no further explanation.  We will opt for this method in
    the code in the next section.
    
  \end{enumerate}
\end{remark}

\begin{exercise}
  Suppose the basis $e_j$ and the matrix $A$ are as in
  Remark~\ref{rem:impl}(\ref{item:Amat}). Prove that
  Assumptions~\ref{asm:wellposed} and~\ref{asm:Pi} imply the spectral
  {\bf{\em{condition number}}} of $A$ satisfies
  \[
  \kappa(A) \le \frac{\lambda_1}{\lambda_0} \frac{C_2^2 C_\varPi^2}{C_1^2},
  \]
  where $\lambda_0,\lambda_1$ are positive numbers such that $
  \lambda_0 \| \vec\chi \|_{2}^2 \le \| x \|_X^2 \le \lambda_1 \|
  \vec \chi \|_{2}^2$ holds for all $x =\sum_j \chi_j e_j$ in $
  X_h$.
\end{exercise}

\section{The Laplacian}  \label{sec:laplacian}

Let $\om$ be a bounded connected open subset of $\RRR^N$ for any $N\ge
2$ with Lipschitz boundary $\d\om$. We focus on the simple boundary
value problem
\begin{subequations}
  \label{eq:bvp}
  \begin{align}
  -\Delta u   & = f && \text{ on } \om, \\
  u & =0 && \text{ on } \d \om.
  \end{align}
\end{subequations}
All functions are real-valued in this section. We assume we have a
mesh $\oh$ as in Definition~\ref{def:iDPG} and additionally assume
that $\d K$ is Lipschitz for all $K\in\oh$ (so that we may use trace
theorems on each element), but the shape of the elements is unimportant
for now.

To develop our PG formulation for~\eqref{eq:bvp}, we set the test
space by
\begin{gather*}
  Y
   =
  H^1(\oh) = \{ v: \; v|_K \in H^1(K), \;\forall K
  \in \oh\} \equiv \prod_{K\in \oh} H^1(K),
  \\
  (v,y)_Y
  = (v,y)_\oh + (\grad v,\grad y)_\oh.
\end{gather*}
Multiplying~\eqref{eq:bvp} by a $v\in Y$ and integrating by parts on
any element $K\in \oh$, we obtain
\begin{equation}
  \label{eq:18}
\int_K \grad u \cdot \grad v -\int_{\d K} (n\cdot\grad u)  v
=\int_K f v.  
\end{equation}
As usual, the integral over $\d K$ must be interpreted as a duality
pairing in $H^{1/2}(\d K)$ if~$u$ is not sufficiently regular.
Summing up~\eqref{eq:18} over all $K\in \oh$ and letting $n\cdot \grad
u$ be an independent unknown, denoted by $\hq_n$, we derive the PG
formulation.  To state it precisely, we use these notations: Let $
(r,s)_\oh = \sum_{K\in\oh} (r,s)_K$ where $(\cdot,\cdot)_D$, for any
domain $D$, denotes the $L^2(D)$-inner product and $
\ip{\ell,w}_{\d\oh} = \sum_{K\in\oh} \ip{\ell,w}_{1/2,\d K}$ where
$\ip{ \ell,\cdot}_{1/2,\d K}$ denotes the action of a functional
$\ell$ in $H^{-1/2} (\d K)$. The PG weak formulation finds $(u, \hq_n)
\in X$ satisfying
\begin{equation}
  \label{eq:Laplacian}
  (\grad u, \grad v)_\oh - \ip{ \hq_n, v}_{\d\oh} = (f,v)_\om,
  \qquad \forall v \in Y,
\end{equation}
where the trial space $X$ is defined as follows: First, 
define the element-by-element trace  operator $\trn$ by
\[
\trn  : \Hdiv\om \to  \prod_{K\in\oh}  H^{-1/2}(\d K), 
\qquad \trn r|_{\d K} =
r\cdot n |_{\d K}.
\]
Here and throughout, $n$ generically denotes the unit outward normal
of any domain under consideration. Now set 
\begin{equation}
  \label{eq:13} 
\begin{aligned}
  H^{-1/2}(\d \om_h)
  & =  \ran(\trn),
  \\
  \|  \hr_n \|_{H^{-1/2}(\d\oh)} 
  & = \inf\bigg\{ \| q \|_{\Hdiv\om}: \;\forall q \in \Hdiv\om
  \text{ such that } \trn(q) = \hr_n\bigg\}.
\end{aligned}
\end{equation}
The trial space is then given by
\begin{align*}
  X & = H_0^1(\om) \times H^{-1/2} (\d\oh),
  \\
  \| ( w,\hr_n) \|_X^2
  & = \| \grad w \|_{L^2(\om)}^2 + \| \hr_n \|_{H^{-1/2}(\d\oh)}^2.
\end{align*}
In~\eqref{eq:13}, the norm is a quotient norm (see
Exercises~\ref{x:quotient}--\ref{x:Ext}). With this quotient norm,
we will not need to explicitly use the subspace topology inherited from
$\prod_K H^{-1/2}(\d K)$.

\begin{exercise}
  \label{x:quotient}
  Suppose $X_1$ and $X_2$ are linear spaces, $A: X_1 \to X_2$ is a
  linear onto map, and let $\pi: X_1 \to X_1 /\ker A$ be the quotient
  map.
  \begin{enumerate}
  \item Prove that there is a unique linear one-to-one and onto map
    $\hat A : X_1/\ker A \to X_2$ such that $A = \hat A \circ \pi$.

  \item If in addition, $X_1$ is a \nls\ and $\ker A$ is closed, then
    using the quotient norm $ \| \pi(u) \|_{X_1/\ker A} = \inf_{w \in
      \ker A } \| u + w \|_{X_1}$, prove that 
    \begin{equation}
      \label{eq:12}
      \| y \|_{X_2} = \|
      \hat A^{-1} y \|_{X_1/\ker A}   
    \end{equation}
    makes $X_2$ into a \nls\ and $\hat A$ establishes an isometric
    isomorphism between $X_1/\ker A$ and~$X_2$.
  \end{enumerate}
\end{exercise}
% \begin{proof}
%   (1). Let $W=\ker A$.  Pick any $\dot x$ in $X_1/W$. The set $ x + W
%   = \pi^{-1}( \dot x)$ is mapped by $A$ to a singleton, namely
%   $\{\,A(x)\,\}.$ Hence, we can use that single value to define $ \hat
%   A(\dot x ) \defn A( \pi^{-1} (\dot x) ) = A(x).$ The the following
%   diagram commutes:
%   \[
%     \xymatrix{
%       X_1 \ar[dr]_\pi \ar[r]^A  & X_2   \\
%       & X_1/W \ar[u]_{\hat A}
%     }
%   \]

%   (2). Since $\ker A$ is closed, $ \| \dot u \|_{X_1/\ker A} = \inf_{w
%     \in \ker A } \| u + w \|_{X_1}$ actually defines a norm on
%   $X_1/\ker A$. Since $\hat A$ is a bijection, its inverse exists so
%   definition~\eqref{eq:12} provides a norm. Now, $\hat A$ is an
%   isometry because $\| \hat A \dot u \|_{X_2} = \| \hat A^{-1} \hat A
%   \dot u \|_{X_1/\ker A} =\| \dot u \|_{X_1/\ker A}$. In particular,
%   $\hat A$ and $\hat A^{-1}$ are continuous in these norms, so
%   $X_1/\ker A$ and $X_2$ are homeomorphic.
% \end{proof}

\begin{exercise}
  For all $r \in \Hdiv\om$, define $\trn r \in H^{-1/2}(\d\oh)$ by
  $\trn r|_{\d K} = r\cdot n |_{\d K}$.  What is $\ker(\trn)$? Verify
  that $\ker(\trn)$ is a closed subspace of $\Hdiv\om$.  Apply
  Exercise~\ref{x:quotient} with $X_1=\Hdiv\om$, $X_2 =
  H^{-1/2}(\d\oh)$, and $A =\trn$, to conclude that the norm
  in~\eqref{eq:13} is the same as~\eqref{eq:12}, and that
  $H^{-1/2}(\d\oh)$ is complete under that norm.
\end{exercise}

\begin{exercise}
  \label{x:Ext}
  Prove that there is a \cts\ linear map $E : H^{-1/2}(\d\oh) \to
  \Hdiv\om$ such that $\| \hr_n \|_{H^{-1/2}(\d\oh)} = \| E \hr_n
  \|_{\Hdiv\om}$ and $\trn E \hr_n = \hr_n$. ({\em Hint:} Consider
  $\hat A^{-1}$ from Exercise~\ref{x:quotient} and find a minimizer
  over a coset.)
\end{exercise}

We now set 
\[
b( \,(w,\hr_n), \, v) = (\grad w, \grad v)_\oh - \ip{
  \hr_n, v}_{\d\oh},\qquad
\ell(v) = (f,v)_\om 
\]
and proceed to analyze the formulation~\eqref{eq:Laplacian}. Let
$\Hdiv\oh = \{ v \in L^2(\om)^N: v|_K \in \Hdiv K $ for all $ K\in\oh
\}$. Define
  \begin{align}
    \label{eq:jmpflux}
    \jmpn{ \tau\cdot n}
    & \defn
    \sup_{ 0\ne \phi \in H_0^1(\om)}  
    \frac{ |\ip{\tau  \cdot n, \phi }_{\d\oh}| }
    { \; \| \phi \|_{H^{1}(\om)} }, 
    && \forall \tau \in \Hdiv\oh, 
    \\ 
    \label{eq:jmptrace}
    \jmpn{ v n }
    & \defn
    \sup_{ 0\ne r \,\in \Hdiv\om} 
    \frac{ |\ip{ r \cdot n,v }_{\d\oh}| }
    { \; \| r \|_{\Hdiv\om} },
    && \forall v \in H^1(\oh).
  \end{align}
\begin{exercise}
  \label{x:jumpcts}
  Prove that any $v \in H^1(\oh)$ has $\jmpn{ vn} =0$ if and only if
  $v\in H_0^1(\om)$.
\end{exercise}
% \begin{proof}
%   By the definition of $H^{-1/2}(\d\oh)$,
% \begin{align}
%   \nonumber
%   \jmpn{v n}=0
%   & \iff 
%   \ip{r\cdot n, v}_{\d\oh}=0 &&\forall r \in \Hdiv\om
%   \\ \label{eq:19}
%   & \iff
%   (r,\grad v)_\oh + (\dive r, v)_\oh =0  &&\forall r \in \Hdiv\om.
% \end{align}
% Clearly $v\in H_0^1(\om) \implies $ left hand side is zero.

% For the converse, it will be useful to distinguish the above gradient
% (computed element-by-element), which we have denoted by $\grad$, from
% the global distributional gradient, which we temporarily denote by
% $\grad'$. Then, due to~\eqref{eq:19}, the action of the distributional
% gradient equals $(\grad' v)(\phi) = -(v,\dive \phi)_\om= (\grad v,r) $
% for all $\phi \in \DD(\om)$, so $\grad' v \in L^2(\om)$ and $v\in
% H^1(\om)$. Using~\eqref{eq:19} again, we find that the global trace
% $v|_{\d\om}=0$.
% \end{proof}

\begin{exercise}
  \label{x:jump1}
  Prove that 
  $\displaystyle{
  \jmpn{ v n }
   =
    \sup_{0\ne \hr_n \in H^{-1/2}(\d\oh)} 
    \frac{ \!\!| \ip{\hr_n,v}_{\d\oh} | }
    { \quad \| \hr_n \|_{H^{-1/2}(\d\oh)} } 
    .}$
\end{exercise}
% \begin{proof}
%   Since $\hr_n \in H^{-1/2}(\d\oh) \iff \exists q \in \Hdiv\om$ such
%   that $ \trn q = \hr_n$,
%   \[
%   \sup_{0\ne \hr_n \in H^{-1/2}(\d\oh)} 
%     \frac{ \!\!\ip{\hr_n,v}_{\d\oh} }
%     { \quad \| \hr_n \|_{H^{-1/2}(\d\oh)} }
%     =
%     \sup_{0\ne q \in \Hdiv\om}
%     \frac{ \!\!\ip{ \trn q,v}_{\d\oh} }
%     { \quad \| \trn q \|_{H^{-1/2}(\d\oh)} } 
%   \]
%   Furthermore, by the definition of $H^{-1/2}(\d\oh)$-norm, 
%   $\| \trn q \|_{H^{-1/2}(\d\oh)} \le \| q \|_{\Hdiv\om}$, so
%   \[
%   \sup_{0\ne \hr_n \in H^{-1/2}(\d\oh)} 
%     \frac{ \!\!\ip{\hr_n,v}_{\d\oh} }
%     { \quad \| \hr_n \|_{H^{-1/2}(\d\oh)} }
%     \ge 
%     \sup_{0\ne q \in \Hdiv\om}
%     \frac{ \!\!\ip{ q\cdot n,v}_{\d\oh} }
%     { \quad \| q \|_{\Hdiv\om} }. 
%   \]
%   The reverse inequality also holds, because by Exercise~\ref{x:Ext},
%   for $q = E\hr_n$, we have
%   \[
%     \frac{ \!\!\ip{\hr_n,v}_{\d\oh} }
%     { \quad \| \hr_n \|_{H^{-1/2}(\d\oh)} }
%     =
%     \frac{ \!\!\ip{ q\cdot n,v}_{\d\oh} }
%     { \quad \| q \|_{\Hdiv\om} }.
%    \]
%    Hence we must have the stated equality.
% \end{proof}

Next, define an orthogonal projection $P:L^2(\om)^N \to \grad
H_0^1(\om)$, by
\begin{equation}
  \label{eq:P}
  ( P q, \grad \phi)_\om = (q,\grad \phi)_\om \qquad \forall \phi \in H_0^1(\om).
\end{equation}
\begin{exercise}
  Prove that $\grad H_0^1(\om)$ is a closed subspace of $L^2(\om)^N$
  (under the current assumptions on $\om$).
\end{exercise}
% \begin{proof}
%   Any injective $A \in L(X,Y)$ has closed range if and only if $\| A u
%   \|_Y \ge C | u \|_X$. But this holds with $A = \grad$ due to the
%   standard \Poincare\ inequality.
% \end{proof}

\begin{lemma}[A \Poincare-type inequality]
  \label{lem:Poincare}
  There is a positive constant $C$ independent of $\oh$ such
  that for all $v$ in $H^1(\oh)$,
  \[
  C \| v \|_\oh \le 
    \| P\grad v \|_\oh + \jmpn{ v  n }.
  \]
\end{lemma}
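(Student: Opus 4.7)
The natural strategy is a classical duality argument: given $v\in H^1(\oh)$, introduce an auxiliary $H_0^1$ function whose Laplacian is $v$, integrate by parts element-by-element, and interpret each resulting term using one of the quantities on the right-hand side. Concretely, let $\phi\in H_0^1(\om)$ be the weak solution of the Dirichlet problem $-\Delta \phi = v$ in $\om$, $\phi = 0$ on $\d\om$. Standard elliptic theory (testing against $\phi$ and using Poincar\'e on $H_0^1(\om)$) immediately gives $\|\grad\phi\|_{L^2(\om)} \le C\|v\|_{L^2(\om)}$, and because $\dive(\grad\phi) = -v \in L^2(\om)$ we also have $\grad\phi \in \Hdiv\om$ with $\|\grad\phi\|_{\Hdiv\om} \le C \|v\|_{L^2(\om)}$.

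Next, I would compute $\|v\|_\oh^2 = (v,-\Delta\phi)_\om$ and integrate by parts on each element $K\in\oh$, summing to obtain
\[
\|v\|_\oh^2 \;=\; (\grad v,\grad\phi)_\oh \;-\; \ip{\grad\phi\cdot n,\,v}_{\d\oh}.
\]
The key algebraic observation is that $\grad\phi \in \grad H_0^1(\om)$, so $P(\grad\phi) = \grad\phi$ and, by the self-adjointness of the orthogonal projection $P$ defined in~\eqref{eq:P},
\[
(\grad v,\grad\phi)_\oh \;=\; (\grad v, P\grad\phi)_\om \;=\; (P\grad v,\grad\phi)_\om.
\]
Cauchy--Schwarz then bounds this by $\|P\grad v\|_\oh\,\|\grad\phi\|_{L^2(\om)} \le C\|P\grad v\|_\oh\,\|v\|_\oh$.

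For the boundary term, I would invoke the definition~\eqref{eq:jmpflux} of $\jmpn{\tau\cdot n}$ --- or, dually, the supremum formulation in~\eqref{eq:jmptrace} --- with the test function $r = \grad\phi \in \Hdiv\om$. This yields
\[
|\ip{\grad\phi\cdot n, v}_{\d\oh}| \;\le\; \jmpn{vn}\,\|\grad\phi\|_{\Hdiv\om} \;\le\; C\,\jmpn{vn}\,\|v\|_\oh.
\]
Combining the two estimates and dividing through by $\|v\|_\oh$ (the case $v=0$ being trivial) gives the desired inequality. The main subtle point is step~3: one must check that $\grad\phi$ actually qualifies as an admissible tester in the $\jmpn{\cdot n}$ supremum, which is exactly what the elliptic regularity $-\Delta\phi=v\in L^2(\om)$ provides. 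Everything else is a routine Cauchy--Schwarz and Poincar\'e bookkeeping exercise; note that the constant $C$ comes only from the Poincar\'e inequality on $\om$ and hence is independent of the mesh $\oh$.
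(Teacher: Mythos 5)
Your proof is correct and follows essentially the same route as the paper's: the auxiliary Dirichlet solve $-\Delta\phi=v$ in $H_0^1(\om)$, element-wise integration by parts, replacement of $\grad v$ by $P\grad v$ using the orthogonal projection (justified exactly as you say, since $\grad\phi\in\grad H_0^1(\om)$), and the boundary term bounded by testing the supremum in~\eqref{eq:jmptrace} with $r=\grad\phi\in\Hdiv\om$, whose $\Hdiv\om$-norm is controlled by $\|v\|_{L^2(\om)}$ via Poincar\'e and $\dive\grad\phi=-v$. The only nitpick is that the relevant definition for the boundary term is~\eqref{eq:jmptrace} (supremum over $r\in\Hdiv\om$), not~\eqref{eq:jmpflux}; you name both but apply the correct one.
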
 
\begin{proof}
  Let $\phi$ in $H_0^1(\om)$ solve the Dirichlet problem $-\Delta \phi
  = v$.  Then,
  \begin{align*}
    \| v \|_{{L^2(\om)}}^2 
    & = (-\Delta \phi,v)_\om
    = (\grad \phi, \grad v)_\oh - \ip{  \frac{\d \phi}{\d n },v }_{\d\oh}
    \\
    &= (\grad \phi, P\grad v)_\oh - \ip{  \frac{\d \phi}{\d n },v }_{\d\oh}
      \\
    & \le \| P\grad v \|_\oh \| \grad \phi \|_\oh + 
    \bigg(
    \frac{ \ip{\grad \phi \cdot n,v }_{\d\oh} } { \quad\| \grad \phi \|_{\Hdiv\om} }
    \bigg)  \| \grad \phi \|_{\Hdiv\om}
    \\
    & \le \left(
      \|P \grad v \|_\oh+
      \sup_{  q  \in \Hdiv\om } \!\!
      \frac{ \ip{v,  q  \cdot n }_{\d\oh} } { \quad\|  q  \|_{\Hdiv\om} }
      \right)
        \| \grad \phi \|_{\Hdiv\om}.
  \end{align*}
  Since $\om$ is bounded and connected, the standard \Poincare\
  inequality holds, so $\| \grad \phi \|^2_{{L^2(\om)}} \le C \| v
  \|^2_{L^2(\om)}$. Moreover, $\|\dive( \grad \phi)\|_{L^2(\om)} = \|
  v \|_{L^2(\om)}$, so the result follows.
\end{proof}

\begin{lemma}[Piecewise harmonic functions]
 \label{lem:harmonic} 
 There is a $C>0$ independent of $\oh$ such that for all 
 $v$ in $H^1(\oh)$ satisfying $\Delta (v|_K) =0$ for all $K\in
 \oh$, we have 
 \[ 
 C \| \grad v \|_\oh
   \le
   \jmpn{\grad v\cdot n }
   + \jmpn{v  n}.
 \]
\end{lemma}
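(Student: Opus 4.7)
\medskip

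\noindent\emph{Proof proposal.} The plan is to split $\grad v$ using the $L^2$-orthogonal projection $P$ from \eqref{eq:P}, namely
\[
\grad v = P\grad v + (I-P)\grad v,
\]
so that by Pythagoras, $\|\grad v\|_\oh^2 = \|P\grad v\|_\oh^2 + \|(I-P)\grad v\|_\oh^2$. I will control the first term by $\jmpn{\grad v \cdot n}$ (using harmonicity) and the second term by $\jmpn{vn}$ (using that $(I-P)\grad v$ is divergence-free). Combining these two bounds will give the claim.

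For the projected part, since $P\grad v \in \grad H^1_0(\om)$, write $P\grad v = \grad \phi$ for some $\phi\in H^1_0(\om)$. By the definition of $P$ and then element-wise integration by parts,
\[
\|P\grad v\|_\oh^2 \;=\; (\grad v,\grad\phi)_\oh \;=\; \sum_{K\in\oh}\bigl(\langle \grad v\cdot n,\phi\rangle_{1/2,\d K} - (\Delta v,\phi)_K\bigr) \;=\; \ip{\grad v\cdot n,\phi}_{\d\oh},
\]
where the volume term vanishes by piecewise harmonicity. Applying the definition \eqref{eq:jmpflux} of $\jmpn{\grad v\cdot n}$ and the standard \Poincare\ inequality (i.e. $\|\phi\|_{H^1(\om)}\le C\|\grad\phi\|_{L^2(\om)} = C\|P\grad v\|_\oh$), I obtain
\[
\|P\grad v\|_\oh^2 \;\le\; \jmpn{\grad v\cdot n}\,\|\phi\|_{H^1(\om)} \;\le\; C\,\jmpn{\grad v\cdot n}\,\|P\grad v\|_\oh,
\]
which yields $\|P\grad v\|_\oh \le C\,\jmpn{\grad v\cdot n}$.

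For the complementary part, set $r = (I-P)\grad v$. By the defining property of $P$, $(r,\grad\psi)_\om = 0$ for every $\psi\in H^1_0(\om)$, which means exactly that $\dive r = 0$ distributionally on $\om$; hence $r\in \Hdiv\om$ with $\|r\|_{\Hdiv\om} = \|r\|_{L^2(\om)}$. Element-wise integration by parts (now testing against $v\in H^1(\oh)$, not $\phi$) gives
\[
\|(I-P)\grad v\|_\oh^2 \;=\; (r,\grad v)_\oh \;=\; -(\dive r,v)_\om + \ip{r\cdot n,v}_{\d\oh} \;=\; \ip{r\cdot n,v}_{\d\oh},
\]
and the definition \eqref{eq:jmptrace} of $\jmpn{vn}$ bounds this by $\jmpn{vn}\,\|r\|_{\Hdiv\om} = \jmpn{vn}\,\|(I-P)\grad v\|_\oh$. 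Dividing once yields $\|(I-P)\grad v\|_\oh \le \jmpn{vn}$. Combining the two estimates produces the desired inequality after adjusting the constant.

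I do not anticipate a serious obstacle here; the one spot to handle carefully is the distributional identification $\dive(I-P)\grad v = 0$ on $\om$ (which needs test functions from $H^1_0(\om)$, not $H^1(\om)$), and the confirmation that element-wise integration by parts of $\ip{r\cdot n,v}_{\d\oh}$ does not leave uncontrolled contributions from $\d\om$. Both are handled by the fact that the pairings in \eqref{eq:jmpflux} and \eqref{eq:jmptrace} are precisely defined as sums over all element boundaries in $\d\oh$.
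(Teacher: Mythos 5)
Your proof is correct and follows essentially the same route as the paper: the splitting $\grad v = P\grad v + (I-P)\grad v$ is exactly the Helmholtz--Hodge decomposition $\grad v = \grad\psi + z$ used there, with the gradient part controlled by $\jmpn{\grad v\cdot n}$ via harmonicity and \Poincare, and the divergence-free part by $\jmpn{vn}$. The only difference is organizational (you bound the two orthogonal pieces separately rather than expanding $\|\grad v\|_\oh^2$ in one chain), which changes nothing of substance.
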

\begin{proof}
  Let $\tau=\grad v$.  We construct the Helmholtz-Hodge decomposition
  of $\tau$, namely $\tau = \grad \psi + z$ with $\psi$ in
  $H_0^1(\om)$ and $z$ in $\Hdiv\om$, as follows: First define $\psi$
  by
  \begin{equation}
    \label{eq:prev15}
    (\grad \psi, \grad \varphi)_\om = (\tau,\grad \varphi)_\oh,
    \qquad \forall \varphi \in H_0^1(\om).
  \end{equation}
  Then, set $z = \tau - \grad \psi$. 

  By~\eqref{eq:prev15}, $ (\tau - \grad \psi, \grad \varphi)_\om = 0$,
  so $\dive z=0$. Hence the two components, $ \grad \psi$ and $z,$ are
  $L^2(\om)$-orthogonal and
  \begin{equation}
    \label{eq:16'}
     \| z  \|_\om^2
     + 
     \| \grad \psi \|^2_\om 
     =\| \tau \|_{\om}^2.
   \end{equation}
   Thus,
   \begin{align*}
     \| \tau \|_\om^2
     & =  ( \tau,\tau) = (  \tau,   \grad \psi + z )_\oh
      = (  \tau,   \grad \psi)_\oh + (\grad v,  z )_\oh
      \\
     & = -(\dive \tau, \psi)_\oh + \ip{\tau\cdot n,\psi }_{\d\oh}
        + \ip{ n \cdot z,v}_{\d\oh},
   \end{align*}
   Since $v$ is harmonic on each element, the first term
   vanishes. Hence
  \begin{align*}
      \| \tau \|_{\om}^2 
      &
      =     
      \frac{\ip{\tau\cdot n,\psi }_{\d\oh}}{ \| \psi \|_{H^1(\om)} } 
      \| \psi \|_{H^1(\om)}  
      +
      \frac{\ip{  n \cdot z,v }_{\d\oh} }
      {\|  z \|_{\Hdiv\om} }  \|  z \|_{L^2(\om)}
      \\
      & \le
      \bigg(\sup_{w\in H_0^1(\om)}
       \frac{\ip{ \tau\cdot n,w }_{\d\oh}}{ \| w \|_{H^1(\om)} } \bigg)
       \| \psi \|_{H^1(\om)}  
      + 
      \bigg(
      \sup_{ q  \in \Hdiv\om} 
      \frac{\!\ip{  n \cdot q,v }_{\d\oh} }
      {\quad\|  q \|_{\Hdiv\om} } \;
      \bigg) 
      \|  z \|_{L^2(\om)}.
    \end{align*}
    The result now follows from~\eqref{eq:16'} and the standard
    \Poincare\ inequality applied to $\psi$. 
\end{proof}

\begin{lemma}
  \label{lem:Pharmonic}
  There is a positive constant $C$ independent of $\oh$ such
  that for all $v$ in $H^1(\oh)$,
  \[
  \| \grad v \|_\oh \le  \| P \grad v \|_\oh + 
  C \jmpn{ v n }.
  \]
\end{lemma}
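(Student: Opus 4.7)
The plan is to reduce the general case to the piecewise harmonic case covered by Lemma~\ref{lem:harmonic}, using a local Helmholtz-type splitting on each element. Given $v\in H^1(\oh)$, I would decompose $v = v_H + v_0$ where, on each element $K$, the function $v_H|_K$ is the harmonic extension into $K$ of the $H^{1/2}(\d K)$ trace of $v|_K$, and $v_0 := v - v_H$ consequently lies in $H_0^1(K)$ on each $K$. Because $v_0$ has vanishing trace on every element face from both sides, it glues together into a member of $H_0^1(\om)$.

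The next step is to establish the element-wise orthogonality $(\grad v_H,\grad v_0)_K = 0$ via integration by parts (using $\Delta v_H|_K = 0$ and $v_0|_{\d K}=0$). Summing, this gives the Pythagoras identity $\|\grad v\|_\oh^2 = \|\grad v_H\|_\oh^2 + \|\grad v_0\|_\oh^2$. Since $v_0\in H_0^1(\om)$, the projection $P$ acts as the identity on $\grad v_0$, so $P\grad v = P\grad v_H + \grad v_0$, and the summands are $L^2(\om)$-orthogonal (as $P\grad v_H\in\grad H_0^1(\om)$ is orthogonal to $\grad v_0$ by the element-wise computation above). This yields simultaneously
\[
\|\grad v_0\|_\oh \le \|P\grad v\|_\oh \qquad\text{and}\qquad \|P\grad v_H\|_\oh\le\|P\grad v\|_\oh.
\]

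Now I would apply Lemma~\ref{lem:harmonic} to $v_H$ (which is piecewise harmonic by construction) to obtain $\|\grad v_H\|_\oh \le C(\jmpn{\grad v_H\cdot n} + \jmpn{v_H n})$. To control the two jump terms by the right-hand side of the lemma, I would verify: (i) $\jmpn{v_H n} = \jmpn{v n}$, because $\ip{r\cdot n, v_0}_{\d\oh}=0$ for every $r\in\Hdiv\om$ in view of $v_0|_{\d K}=0$; and (ii) $\jmpn{\grad v_H\cdot n} \le \|P\grad v_H\|_\oh$, by taking any test $\phi\in H_0^1(\om)$, integrating by parts on each $K$ (using $\Delta v_H=0$) to rewrite $\ip{\grad v_H\cdot n,\phi}_{\d\oh} = (\grad v_H,\grad\phi)_\oh = (P\grad v_H,\grad\phi)_\om$, and applying Cauchy–Schwarz together with $\|\grad\phi\|_\om\le\|\phi\|_{H^1(\om)}$.

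Combining everything yields $\|\grad v\|_\oh \le \|\grad v_H\|_\oh + \|\grad v_0\|_\oh \le C(\|P\grad v\|_\oh + \jmpn{v n})$, which is the desired inequality. The main technical obstacle is step (ii): making sure that the element-wise integration by parts for $\grad v_H\cdot n$ against $H_0^1(\om)$ test functions is legitimate when $v_H$ is only known to be in $H^1(K)$, and then recognizing the resulting broken bilinear form as exactly the $P$-projected $L^2$ inner product, which is what lets us collapse the jump norm of the normal trace into the tractable term $\|P\grad v\|_\oh$.
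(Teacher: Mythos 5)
Your argument is correct, but it takes a genuinely different route from the paper's. The paper splits $v = z + \veps$ where $z\in H_0^1(\om)$ is chosen globally so that $\grad z = P\grad v$; the remainder $\veps=v-z$ is then automatically piecewise harmonic \emph{and} has vanishing flux jump, $\jmpn{\grad \veps\cdot n}=0$ (both facts come from the single orthogonality relation $(\grad \veps,\grad\phi)_\oh=0$ for all $\phi\in H_0^1(\om)$), so Lemma~\ref{lem:harmonic} applied to $\veps$ leaves only $C\jmpn{\veps n}=C\jmpn{vn}$, and the triangle inequality yields the bound with coefficient exactly $1$ on $\|P\grad v\|_\oh$. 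You instead split $v=v_H+v_0$ by elementwise harmonic extension, which makes the \emph{trace}-jump identity $\jmpn{v_H n}=\jmpn{vn}$ immediate (since $v_0|_{\d K}=0$) but leaves a nonzero flux jump $\jmpn{\grad v_H\cdot n}$; your step (ii), rewriting $\ip{\grad v_H\cdot n,\phi}_{\d\oh}=(\grad v_H,\grad\phi)_\oh=(P\grad v_H,\grad\phi)_\om$ for $\phi\in H_0^1(\om)$, is exactly the right way to absorb it, and the technical point you flag is unproblematic because $\grad v_H|_K\in \Hdiv K$ (indeed $\dive\grad v_H=\Delta v_H=0$), so the normal-trace duality pairing and the elementwise integration by parts are legitimate. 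The one blemish is quantitative: your chain gives $\|\grad v\|_\oh\le\|\grad v_H\|_\oh+\|\grad v_0\|_\oh\le (1+C)\|P\grad v\|_\oh+C\jmpn{vn}$, i.e.\ a coefficient $1+C$ rather than $1$ in front of $\|P\grad v\|_\oh$. This is harmless for the lemma's only use (the inf-sup bound in Theorem~\ref{thm:laplacian}, where any fixed constants suffice), but it is nominally weaker than the statement as written; the paper's choice of splitting is precisely what kills the flux-jump term and yields the sharp coefficient $1$.
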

\begin{proof}
  Let $z \in H_0^1(\om)$ be such that $P\grad v = \grad z$, let
  $\veps = v-z$, and let $r|_K = -\grad( \veps|_K)$ on all $K \in
  \oh$. Then,~\eqref{eq:P} implies
  \begin{equation}
    \label{eq:prev2}
    (P\grad v-\grad v,\grad \phi)=
    (r, \grad \phi)_K =0 \qquad \forall \phi \in H_0^1(\om).
  \end{equation}
  Choosing $\phi \in \DD(K)$, we immediately find that $\dive (r|_K)
  =0$, i.e., $\veps$ is harmonic on each $K\in \oh$.  Applying
  Lemma~\ref{lem:harmonic}, we thus obtain
  \begin{equation}
    \label{eq:3}
    C\| \grad \veps\|_\oh \le 
      \jmpn{ r \cdot n } + \jmpn{ \veps n}.
  \end{equation}
  But $\jmpn{r \cdot n} =0$. This is because we may integrate by parts
  element by element to conclude from~\eqref{eq:prev2} that
  \[
  0=(r,\grad\phi)_\oh = -(\dive r, \phi)_\oh + \ip{ r\cdot n, \phi}_{\d\oh} 
  =  \ip{ r\cdot n, \phi}_{\d\oh},
  \]
  for all $\phi \in H_0^1(\om)$, so definition~\eqref{eq:jmpflux}
  implies $\jmpn{r \cdot n} =0$. Moreover,
  definition~\eqref{eq:jmptrace} shows that $\jmpn{ \veps n } = \jmpn{
    v n }$. Therefore, returning to~\eqref{eq:3}, we conclude that
  \begin{align*}
    \| \grad v \|_\oh 
    \le \| P\grad v  \|_\oh + \|\grad \veps \|_\oh
    \le 
     \| P\grad v  \|_\oh + C \jmpn{v n},
  \end{align*}
  which proves the lemma.
\end{proof}

\begin{theorem}
  \label{thm:laplacian}
  Assumption~\ref{asm:wellposed} holds for the
  formulation~\eqref{eq:Laplacian}.
\end{theorem}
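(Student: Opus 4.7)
The plan is to verify the three conditions of Assumption~\ref{asm:wellposed}: injectivity of the form on~$X$, the inf-sup lower bound, and the continuity upper bound. The continuity and uniqueness steps are short and mostly bookkeeping; the inf-sup bound is the substantive step and is where Lemmas~\ref{lem:Poincare} and~\ref{lem:Pharmonic} get used.

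For \textbf{continuity}, I would Cauchy-Schwarz the two pieces of $b$. The volume term is bounded by $\|\grad w\|_\oh\|\grad v\|_\oh$. For the boundary term, Exercise~\ref{x:jump1} gives $|\ip{\hr_n,v}_{\d\oh}|\le \|\hr_n\|_{H^{-1/2}(\d\oh)}\jmpn{vn}$, and $\jmpn{vn}\le\|v\|_Y$ follows by integrating element-by-element any $r\in\Hdiv\om$ against~$v$. Adding gives $|b((w,\hr_n),v)|\le \sqrt{2}\,\|(w,\hr_n)\|_X\|v\|_Y$.

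For \textbf{uniqueness}, suppose $b((w,\hr_n),v)=0$ for all $v\in Y$. Testing first with $v\in H_0^1(\om)\subset Y$ kills the boundary term and yields $-\Delta w=0$ in distributions; combined with $w\in H_0^1(\om)$ this forces $w\equiv 0$. Then $\ip{\hr_n,v}_{\d\oh}=0$ for all $v\in H^1(\oh)$. For any fixed $K\in\oh$ and any $\psi\in H^{1/2}(\d K)$, lift $\psi$ to some $v_K\in H^1(K)$ and extend by zero to the remaining elements; the resulting $v\in H^1(\oh)$ tests $\hr_n$ only on~$\d K$, so $\hr_n|_{\d K}=0$ in $H^{-1/2}(\d K)$. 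This holds for every $K$, so $\hr_n=0$.

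For the \textbf{inf-sup} bound, my strategy is to dominate $\|v\|_Y$ by $\|P\grad v\|_\oh+\jmpn{vn}$ and then realize each of these two quantities as a separate lower bound for the supremum. Combining Lemma~\ref{lem:Pharmonic} with Lemma~\ref{lem:Poincare} gives $\|v\|_Y^2 \le C\bigl(\|P\grad v\|_\oh^2+\jmpn{vn}^2\bigr)$. To bound the right side by the supremum, I would take two test choices. First, choose $w\in H_0^1(\om)$ with $\grad w=P\grad v$ (possible since $P$ ranges in $\grad H_0^1(\om)$) and $\hr_n=0$; since $P$ is an orthogonal projection, $b((w,0),v)=(P\grad v,\grad v)_\oh=\|P\grad v\|_\oh^2$ while $\|(w,0)\|_X=\|P\grad v\|_\oh$, giving a lower bound $\|P\grad v\|_\oh$ for the supremum. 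Second, take $w=0$ and vary $\hr_n$; by Exercise~\ref{x:jump1}, $\sup_{\hr_n\ne 0}|\ip{\hr_n,v}_{\d\oh}|/\|\hr_n\|_{H^{-1/2}(\d\oh)}=\jmpn{vn}$, so the supremum also dominates $\jmpn{vn}$. Hence the supremum is at least $\max(\|P\grad v\|_\oh,\jmpn{vn})\ge \tfrac{1}{\sqrt{2}}(\|P\grad v\|_\oh^2+\jmpn{vn}^2)^{1/2}\ge C_1\|v\|_Y$.

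The \textbf{main obstacle} is the inf-sup step, and within it the two Poincaré-type lemmas already in the paper do the heavy lifting. The one subtlety to handle carefully is that the optimal test pair $(w,\hr_n)$ giving both $\|P\grad v\|_\oh$ and $\jmpn{vn}$ simultaneously need not be taken jointly: it is enough to use the two separate choices above and note that the supremum over $X$ dominates the supremum over either closed subspace, from which the $\max$ (and hence the Euclidean combination) is recovered.
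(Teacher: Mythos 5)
Your proof is correct and follows essentially the same route as the paper: continuity via Exercise~\ref{x:jump1} together with $\jmpn{vn}\le\|v\|_Y$, and the inf-sup bound by restricting to the two component subspaces of $X$ to capture $\|P\grad v\|_\oh$ and $\jmpn{vn}$ separately, then invoking Lemmas~\ref{lem:Poincare} and~\ref{lem:Pharmonic}. The only difference is cosmetic (you combine the two lower bounds via a maximum rather than a sum), and you additionally write out the uniqueness argument that the paper leaves as an exercise; that argument is also correct.
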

\begin{proof}
  The uniqueness part of Assumption~\ref{asm:wellposed}, namely $\{
  (w,\hs_n) \in X: \;b(\,(w,\hs_n), y)=0,\; \forall y\in Y\} = \{0\}$
  can be proved by an argument analogous to what we have seen
  previously (see between~\eqref{eq:9} and~\eqref{eq:10}), so is left
  as an exercise. 

  To prove the continuity estimate, we use $|\ip{ \hs_n, v}_{\d\oh}
  |\le \| \hs_h \|_{H^{-1/2}(\d\oh)} \jmpn{ v n}$, a consequence of
  Exercise~\ref{x:jump1}, to get 
  \[
  |b (\,(w,\hs_n), y)| 
  \le \| (w,\hs_n) \|_X \left( \| \grad v
    \|_\oh^2 + \jmpn{ vn } \right)^{1/2}
  \]
  Now, since~\eqref{eq:jmptrace} implies
    \begin{align*}
    \jmpn{ v n }
    & 
  = \sup_{ r \,\in \Hdiv\om} 
    \frac{ (r, \grad v)_\oh + (\dive r, v)_\oh }
    { \; \| r \|_{\Hdiv\om} }
    \le \| v \|_Y,
  \end{align*}
  the continuity estimate is proved.

  It only remains to prove the inf-sup condition. But 
  \begin{align*}
    \sup_{\,(w,\hs_n)\in {{X}}}
    \frac{ |b(\,(w,\hs_n),v)| }{ \| (w,\hs_n) \|_{{X}}}
    &
    \ge  \sup_{w\in H_0^1(\om) } \frac{ (\grad w, \grad
    v)_\oh }{ \| \grad w \|_{L^2(\om)} }
  = \| P \grad v \|_{L^2(\om)},
  \\
  \sup_{\,(w,\hs_n)\in {{X}}}
    \frac{ |b(\,(w,\hs_n),v)| }{ \| (w,\hs_n) \|_{{X}}}
    &
    \ge
  \sup_{\hs_n \in H^{-1/2}(\d\oh)}
  \frac{ |\ip{\hs_n, v}_{\d\oh} |}
       { \| \hs_n \|_{H^{-1/2}(\d\oh)}  }
       = \jmpn{ vn},
  \end{align*}
  so the required inf-sup condition follows by adding and using 
  Lemmas~\ref{lem:Pharmonic} and~\ref{lem:Poincare}.
\end{proof}

To consider a particular instance of the DPG method, we now fix element
shapes to be triangles.  For any integer $p\ge 0$ let $P_p(K)$ denote
the space of polynomials of degree at most $p$ restricted to $K$.  For
any triangle $K$, let $P_p(\d K)$ denote the set of functions on $\d
K$ whose restrictions to each edge of $K$ is a polynomial of degree at
most $p$. We now set
\begin{subequations}
  \label{eq:discretespaceslaplace}
  \begin{align}
    \label{eq:Xhlap}
    X_h
    & = \{ (w, \hs_n) \in X: \; w|_K \in P_{p+1}(K), \; 
     \hs_n|_{\d K} \in P_p(\d K) \; \;\forall K\in \oh \},
     \\
     \label{eq:Yrlap}
    Y^r & = \{ v\in Y: \; v|_K \in P_r(K),\; \;\forall K\in \oh \},
  \end{align}
\end{subequations} 
compute the inexact test space $Y_h^r$, and consider
the DPG method that finds $(u_h, \hqh)\in X_h$ solving 
\begin{equation}
  \label{eq:pdpglaplacian}
  (\grad u_h, \grad v)_\oh - \ip{
  \hqh, v}_{\d\oh}
  = (f,v)_\om,\qquad\forall v\in Y_h^r. 
\end{equation}

\begin{theorem}
  \label{thm:laplacian2}
  Suppose $N=2$, $\oh$ is a shape regular finite element mesh of
  triangles and $X_h$ and $Y^r$ are set as
  in~\eqref{eq:discretespaceslaplace}.  Then, whenever $r \ge p+N$,
  Assumption~\ref{asm:Pi} holds. Consequently, by
  Theorem~\ref{thm:inex}, the DPG method~\eqref{eq:pdpglaplacian} is
  quasioptimal.
\end{theorem}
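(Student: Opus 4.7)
The plan is to verify Assumption~\ref{asm:Pi} by constructing the Fortin operator $\varPi:Y\to Y^r$ element-by-element. Exploiting the product structures $Y=\prod_K H^1(K)$ and $Y^r=\prod_K P_r(K)$, and recalling that for $w_h = (w,\hs_n)\in X_h$ the first factor restricts to $P_{p+1}(K)$ on each $K\in\oh$ and the flux factor restricts to $P_p(e)$ on each edge $e\subset\d K$, the requirement $b(w_h,v-\varPi v)=0$ reduces to two local moment conditions on an element-wise operator $\varPi_K:H^1(K)\to P_r(K)$:
\[
\int_e(v-\varPi_K v)\mu\,ds=0\ \ \forall \mu\in P_p(e),\ e\subset\d K,\qquad (\grad(v-\varPi_K v),\grad w)_K=0\ \ \forall w\in P_{p+1}(K).
\]
One integration by parts shows the interior condition is equivalent to $(v-\varPi_K v,q)_K=0$ for all $q\in\Delta P_{p+1}(K)=P_{p-1}(K)$ (vacuous when $p=0$), since the boundary term involves $\grad w\cdot n|_e\in P_p(e)$ and is killed by the first condition.

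I would construct $\varPi_K$ on a reference triangle $\hat K$ in two stages, then transport to $K$ by affine map. Stage one produces an initial $\tilde\varPi v\in P_r(\hat K)$ matching all edge moments: for each edge $e$ of $\hat K$, extend the $L^2(e)$-projection of $v|_e$ onto $P_p(e)$ into $\hat K$ by multiplying a lifted polynomial of degree $p$ by the product of the two barycentric coordinates vanishing on the other edges; this edge bubble has degree at most $p+2$ and therefore lies in $P_r(\hat K)$ precisely when $r\ge p+2=p+N$, and the vanishing factors decouple the contributions from different edges. Stage two adds an interior bubble correction $\lambda_1\lambda_2\lambda_3\,q$ with $q\in P_{r-3}(\hat K)\supset P_{p-1}(\hat K)$ (requiring again $r\ge p+2$); this correction vanishes on $\d\hat K$ and so preserves the edge moments, while the residual interior moment conditions become a square linear system for $q\in P_{p-1}(\hat K)$. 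Solvability of that system follows from the nondegeneracy of the weighted pairing $(q_1\lambda_1\lambda_2\lambda_3,q_2)_{\hat K}$ on $P_{p-1}(\hat K)\times P_{p-1}(\hat K)$, which is immediate because $\lambda_1\lambda_2\lambda_3>0$ in the interior of $\hat K$.

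Boundedness $\|\varPi_{\hat K}\hat v\|_{H^1(\hat K)}\le C\|\hat v\|_{H^1(\hat K)}$ is then automatic by continuity of the (finite-dimensional) edge and interior moment maps, and a standard affine-scaling argument that invokes the shape regularity of $\oh$ upgrades this to $\|\varPi_K v\|_{H^1(K)}\le C_\varPi\|v\|_{H^1(K)}$ with $C_\varPi$ independent of the mesh size. Summing over $K\in\oh$ yields the two conclusions of Assumption~\ref{asm:Pi}, and Theorem~\ref{thm:inex} then delivers quasi-optimality. The main obstacle is the unisolvence/dimension bookkeeping underlying the two-stage construction: one has to arrange that the edge bubbles of stage one span a complement to the space used in stage two inside $P_r(\hat K)$, so that prescribing the $3(p+1)$ edge moments and the $\binom{p+1}{2}$ interior moments proceeds without linear dependencies. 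The threshold $r\ge p+N$ enters twice---once because each edge-bubble extension picks up $N=2$ extra degrees from the two vanishing barycentrics, and once because the interior correction needs $P_{r-3}\supset P_{p-1}$---and this double role explains the stated sharp threshold.
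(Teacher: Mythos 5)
Your reduction of Assumption~\ref{asm:Pi} to the two local moment conditions (edge moments against $P_p(e)$ and interior moments against $\Delta P_{p+1}(K)=P_{p-1}(K)$) is exactly what the paper uses, and your two-stage bubble construction is a perfectly good way to establish the unisolvence that the paper dismisses as ``easy to see''; the threshold $r\ge p+2$ appears for the same reason in both arguments. Two issues, one minor and one genuine. The minor one: multiplying the \emph{unweighted} $L^2(e)$-projection of $v|_e$ by the edge bubble $\lambda_j\lambda_k$ does not reproduce the moments $\int_e v\,\mu$ for $\mu\in P_p(e)$, because the bubble reweights them; you must instead choose the degree-$p$ factor by inverting the (positive-definite, hence invertible) weighted pairing $(q,\mu)\mapsto\int_e \lambda_j\lambda_k\,q\,\mu$, exactly as you already do for the interior correction. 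This is a one-line fix.

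The genuine gap is the boundedness claim. Your operator is built entirely from edge and interior bubbles, all of which vanish at the vertices of $\hat K$, so $\varPi_{\hat K}$ does \emph{not} reproduce constants. Consequently the ``standard affine-scaling argument'' fails: in two dimensions the $H^1$-seminorm is scale-invariant under the affine map while the $L^2$-norm scales like $h$, so transporting the reference bound $\|\varPi_{\hat K}\hat v\|_{H^1(\hat K)}\le C\|\hat v\|_{H^1(\hat K)}$ back to $K$ only yields $\|\varPi_K v\|_{H^1(K)}\le C\,h^{-1}\|v\|_{H^1(K)}$. This is not an artifact of the estimate: taking $v\equiv 1$ gives $\|v\|_{H^1(K)}\sim h$ while $\varPi_K 1$ is a fixed nonconstant polynomial on the reference element, so $|\varPi_K 1|_{H^1(K)}\sim 1$ and the operator norm genuinely blows up like $1/h$. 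The paper's proof avoids this by first building the zero-at-vertices operator $\vpi_r^0$ with your moment properties and then setting $\vpi v = \vpi_r^0(v-\bar v)+\bar v$, where $\bar v$ is the mean of $v$ on $K$. Since $\vpi v - v = \vpi_r^0(v-\bar v)-(v-\bar v)$, the moment conditions (and hence $b(w_h,v-\vpi v)=0$) are untouched, while the \Poincare\ inequality $\|v-\bar v\|_{L^2(K)}\le C h\,|v|_{H^1(K)}$ exactly absorbs the bad factor $h^{-1}$ in the scaling, and $\|\bar v\|_{L^2(K)}\le\|v\|_{L^2(K)}$ handles the added constant. You need to graft this mean-value correction onto your construction to obtain a $C_\varPi$ depending only on shape regularity, which is what Theorem~\ref{thm:inex} requires for a mesh-independent quasioptimality constant.
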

\begin{proof}
  Let $r=p+N$. It is easy to see that for every $v\in H^{1}(K)$, there
  is a unique $\vpi_r^0v\in P_r(K)$ satisfying
\begin{align*}
  \vpi_r^0 v & =0 
  &&\text{at all 3 vertices of } K,
  \\
  (\vpi_r^0 v - v,q_{p-1})_{K}
  & =0,
  && \forall q_{p-1}\in P_{p-1}(K),\\
  \langle \vpi_r^0 v - v, \mu_{p}\rangle_{\partial K}
  & =0,
  && \forall \mu_{p}\in P_{p}(\partial K).
\end{align*}
Setting $\vpi v = \vpi_r^0 (v-\bar v) + \bar v$, where $\bar v $
denotes the mean value of $v$ on $K$, it is an exercise to show that
there is a $C$ independent of the size of the triangle $K$ (but
dependent on the shape regularity of $K$) such that
\begin{subequations}
\begin{align}
  \label{eq:20}
  (\vpi v - v,q_{p-1})_{K}
  & =0,
  && \forall q_{p-1}\in P_{p-1}(K),\\
  \label{eq:25}
  \langle \vpi v - v, \mu_{p}\rangle_{\partial K}
  & =0,
  && \forall \mu_{p}\in P_{p}(\partial K),
  \\
  \| \vpi v \|_{H^1(K) } 
  & \le C \| v\|_{H^1(K)}
  && \forall v \in H^1(K).
\end{align}
\end{subequations}
Then,
\begin{align*}
  b(\, (w_h, \hsh), v - \vpi v) 
  & =
  (\grad w_h,  \grad (v - \varPi v) )_\oh - \ip{ \hsh, v-\varPi v}_{\d\oh} 
  \\
  & =
  -(\Delta w_h,  v - \varPi v )_\oh - 
  \ip{ \hsh - n\cdot \grad w_h, v-\varPi v}_{\d\oh} =0,
\end{align*}
by~\eqref{eq:20} and~\eqref{eq:25}.
\end{proof}

\begin{example}  \label{eg:laplacehybrid}
To put this method into the framework of
Assumption~\ref{asm:hybrid}, set 
\begin{gather*}
X_0 = H_0^1(\om),\quad
\hX = H^{-1/2}(\d\oh), 
\\
b_0(u,y) = (\grad u, \grad y)_\oh,\quad
\hb(\hq_n,y) = -\ip{\hq_n,y}_{\d\oh}.
\end{gather*}
Furthermore, the $X_h$ in~\eqref{eq:Xhlap} can be split into $\Xoh
\times \hX_h$  with 
\begin{align*}
  \Xoh & = \{ w\in H_0^1(\om): \; w|_K \in P_{p+1}(K),\;\;\forall K\in \oh \},
  \\
  \hX_h & = \{  \hs_n \in  H^{-1/2}(\d\oh): \;
             \hs_n|_{\d K} \in P_p(\d K) \; \;\forall K\in \oh \},
\end{align*}
so  that $Y_0$ in~\eqref{eq:21} becomes 
\[
\Yo= \{ y\in H^1(\oh) : \ip{ \hsh,y}_{\d\oh}=0,\; \forall \hsh
\in \hX_h\}, 
\]
a {\bf {\em{weakly conforming}}} subspace of $H^1(\om)$. Its subspace,
defined in~\eqref{eq:30} becomes $ Y_0^r = \{ y\in Y^r : \ip{
  \hsh,y}_{\d\oh}=0,\; \forall \hsh \in \hat X_h\}$. The
non-hybrid form of the DPG method, namely~\eqref{eq:27} uses this
$Y_0^r$ and finds $u_h\in \Xoh$ satisfying
\begin{equation}
  \label{eq:14}
  (\grad u_h, \grad y_0)_\oh = (f,y_0) \qquad \forall y_0\in \Yohr.
\end{equation}
Recall that $y_0\in \Yohr$ if and only if it is in $Y_0^r$ and solves 
\begin{equation}
  \label{eq:28}
  (\grad y_0, \grad v)_\oh + (y_0,v)_\oh = (\grad w,\grad v)_\oh
  \qquad \forall v \in Y^r
\end{equation}
for some $w \in \Xoh$.  By Theorem~\ref{thm:pdpghybrid}, the $u_h$
in~\eqref{eq:14} coincides with the first solution component of the
hybrid DPG method~\eqref{eq:pdpglaplacian}. The difficulty with
implementing~\eqref{eq:14} is that the computation of $\Yohr$,
requiring multiple solves of the global weakly conforming
problem~\eqref{eq:28}, is too expensive. In contrast the hybrid
form~\eqref{eq:pdpglaplacian} is easily implementable as the
computation of $Y_h^r$ amounts to inverting a block diagonal matrix.
\end{example}

Before concluding, let us consider {\bf{\em{convergence rates}}}. The
error estimate of Theorem~\ref{thm:inex} (which holds by virtue of
Theorems~\ref{thm:laplacian} and~\ref{thm:laplacian2}) gives
\[
\| u - u_h \|_{H^1(\om)} + \| \hq_n - \hqh \|_{H^{-1/2}(\oh)} 
\le C \inf_{ (w_h,\hsh)\in X_h} 
\left(
\| u - w_h \|_{H^1(\om)} + \| \hq_n - \hsh \|_{H^{-1/2}(\oh)} 
\right).
\]
Henceforth $C>0$ denotes a generic constant independent of $
h=\max_{K\in \oh} \diam(K) $ but dependent on the mesh's shape
regularity.  To obtain convergence rates in terms of $h$, we must
bound the infimum above. Suppose $u$ is smooth.  By the
Bramble-Hilbert lemma,
\begin{equation}
  \label{eq:31}
  \inf_{w_h\in \Xoh}  \| u - w_h \|_{H^1(\om)} \le C h^{p+1} | u |_{H^{p+2}(\om)}.  
\end{equation}
For the error in $\hq_n$, let $q =\grad u$ and let $\pirt q$ denote
the Raviart-Thomas projection of $q$ into $\{ r\in \Hdiv\om: r|_K \in
P_p(K) + x P_p(K), \;\forall K\in\oh\}$. Then $\pirt q \cdot n \in
\hX_h$, so
\[
\inf_{\hsh\in \hX_h}  
\| \hq_n - \hsh \|_{H^{-1/2}(\oh)}
\le
\| (q - \pirt q) \cdot n \|_{H^{-1/2}(\oh)}
\le
\| q - \pirt q \|_{\Hdiv\om}
\]
where we have used~\eqref{eq:13}, by which, the $H^{-1/2}(\oh)$-norm
of a function can be bounded by the $\Hdiv\om$-norm of any of its
extensions. Estimating $\| q - \pirt q \|_{\Hdiv\om} $ as usual,
\begin{equation}
  \label{eq:32}
  \inf_{\hsh\in \hX_h}  
  \| \hq_n - \hsh \|_{H^{-1/2}(\oh)}
  \le C h^{p+1}\left( | q|_{H^{p+1}(\om)} + |\dive q|_{H^{p+1}(\om)} \right).
\end{equation}
From~\eqref{eq:31} and~\eqref{eq:32}, we obtain $O(h^{p+1})$
convergence for $u_h$ and $\hqh$.

\begin{wraptable}
  [7] % height of figure in lines]
  {r}
  [.0cm]  % overhang
  {2.2in} % width 
  \centering{\footnotesize{
   \begin{tabular}{|l|cc|}
    \hline
    $h/\sqrt2$    &$\|u-u_h\|_{H^1(\om)}$&$\|\veps^r\|_{H^1(\oh)}$
    \\     \hline
    1/4 &0.008277&0.008987
    \\
    1/8 &0.002111&0.002297
    \\
    1/16&0.000531&0.000579
    \\
    1/32&0.000133&0.000145
    \\
    1/64&0.000033&0.000036
    \\ \hline
  \end{tabular}
}}
\end{wraptable}
Let us now check if we see this convergence rate in practice.  We use
a \fenics\ code (download code from
\href{http://web.pdx.edu/~gjay/pub/dpglaplace.py}{\underline{here}})
which implements the mixed reformulation of the DPG method given in
Theorem~\ref{thm:pdggmixed} (see also Remark~\ref{rem:impl}). Solving
a simple problem with a smooth solution (see the code for details) on
the unit square, using $p=1$ and uniform meshes with various $h$, we
collect the results in the table aside.  Clearly, $\| u - u_h
\|_{H^1(\om)}$ appears to converge at $O(h^2)$, in accordance with the
theory. Also, the error estimator $\veps^r$ (see
Definition~\ref{def:estimator}) appears to converge to zero at the
same rate as the error.

\begin{figure}
  \centering      %      l   b   r   t
  \includegraphics[trim=5cm 4cm 5cm 4cm, clip, width=0.3\textwidth]{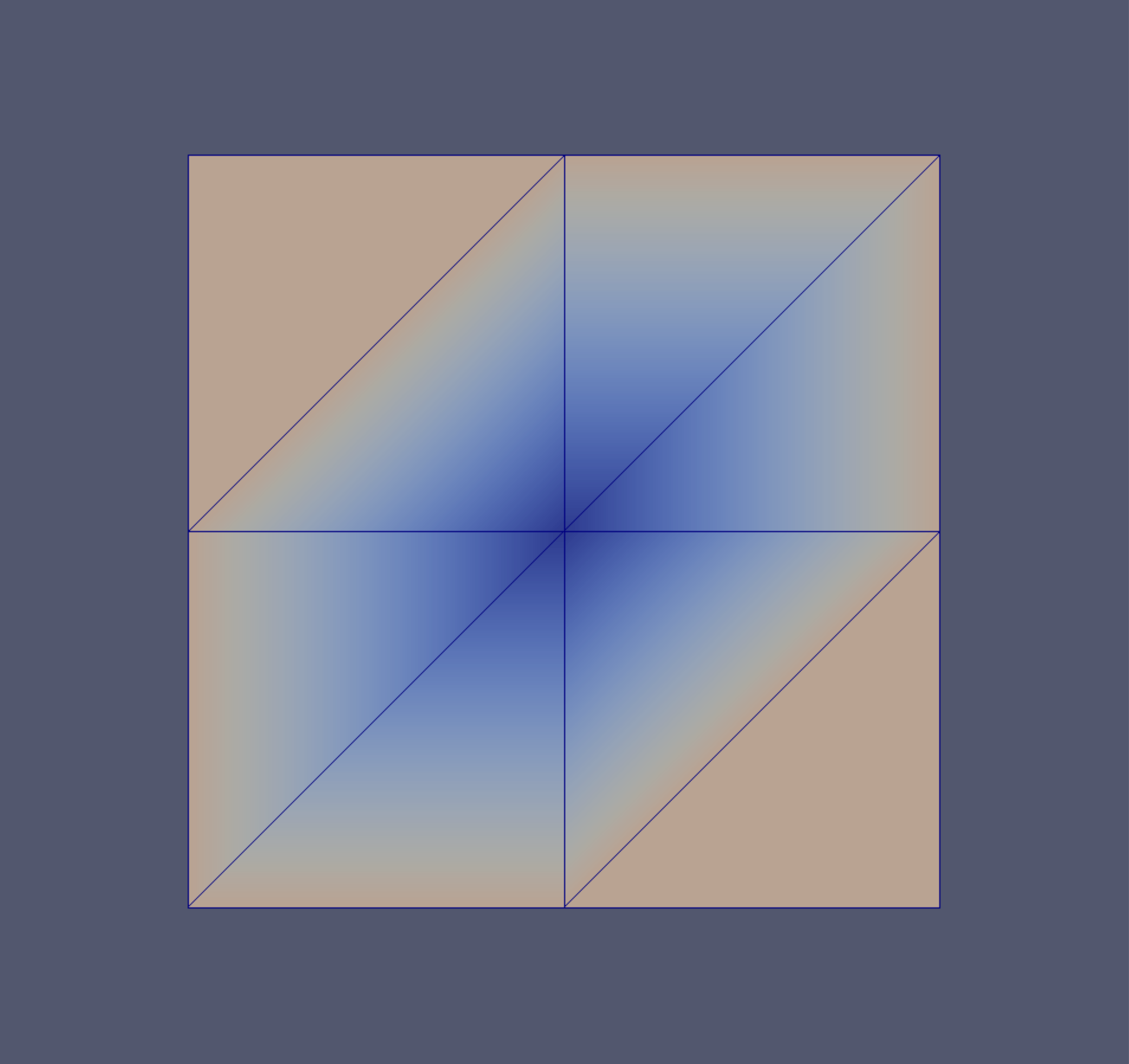}
  \includegraphics[trim=5cm 4cm 5cm 4cm, clip, width=0.3\textwidth]{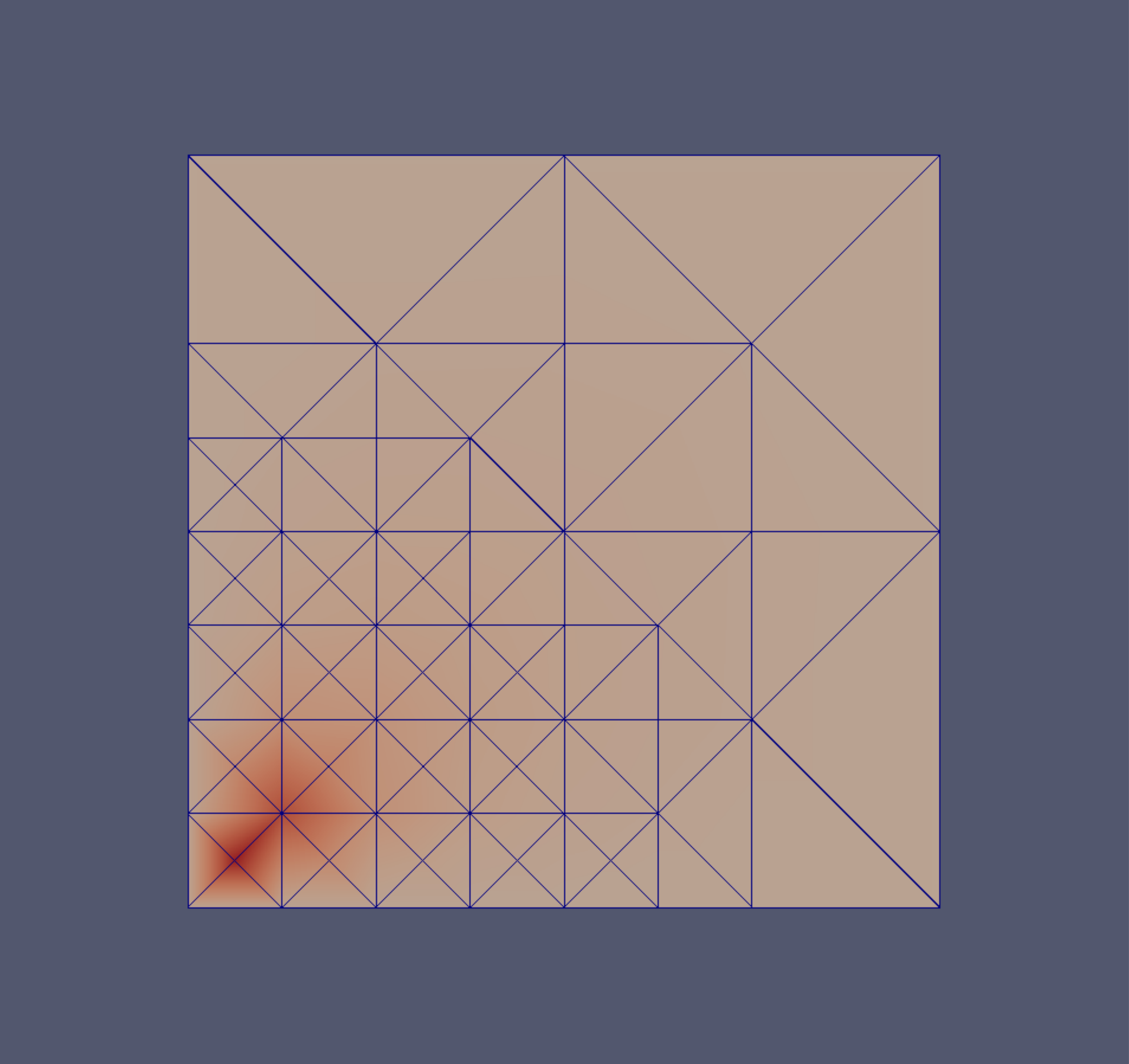}
  \includegraphics[trim=5cm 4cm 5cm 4cm, clip, width=0.3\textwidth]{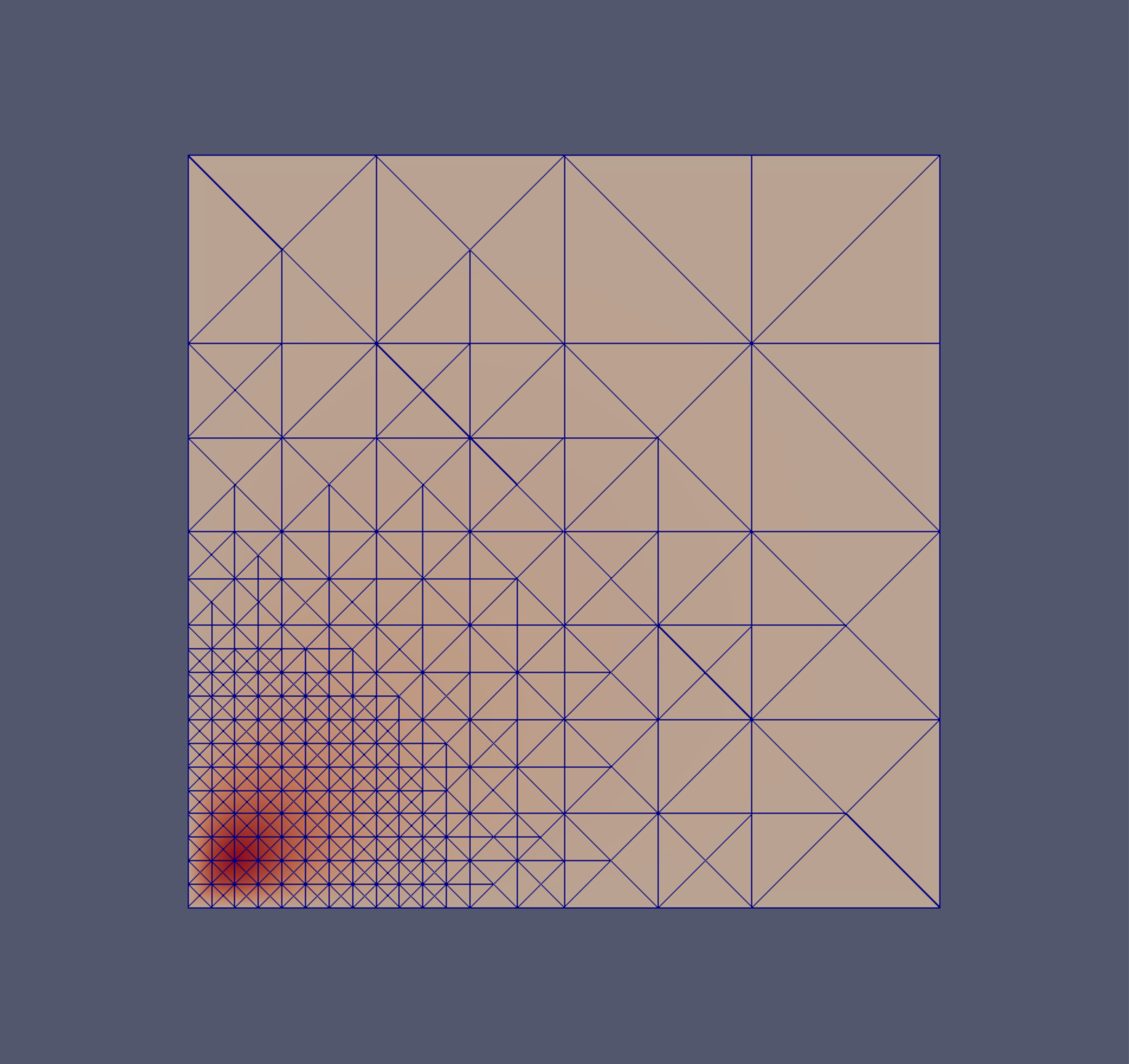}
  \caption{Initial, midway, and final iterates in an adaptive scheme
    using the DPG error estimator~$\veps^r$.}
  \label{fig:adapt}
\end{figure}

It is possible to prove that the error estimator $\veps^r$ is an
efficient and reliable indicator of the actual error, but to keep
these lectures introductory, we omit the details. Instead, let us consider 
a \fenics\ implementation of a typical adaptive algorithm using the
element-wise norms of $\veps^r$ as the error indicators (download the
code from
\href{http://web.pdx.edu/~gjay/pub/dpglapladapt.py}{\underline{here}}). In
the code, we compute the element error indicator
$\|\veps^r\|_{H^1(K)}$ on each $K\in\oh$, and sort the elements in
decreasing order of the indicators. The elements falling in the top
half are marked for refinement. In the next iteration of the adaptive
algorithm, those elements (and possibly other adjacent elements) are
refined by bisection, the DPG problem is solved on the new mesh, and
the newly obtained $\veps^r$ is used to mark elements as before. We
use this process to approximate the solution of the Dirichlet
problem~\eqref{eq:bvp} on the unit square  with
$
f = e^{-100(x_0^2+x_1^2)}.
$
We expect the solution to have interesting variations only near the
origin. As seen in Figure~\ref{fig:adapt}, the error estimator
automatically identifies the right region for refinement even though
we started with a very coarse mesh.

\newpage

\section*{Appendix}

\bigskip

{\footnotesize{

\subsection*{Codes}

The programs are in the python \fenics\ environment. You will need to
download and install \fenics\ from \href{http://fenicsproject.org}
{\underline{\smash[b]{{fenicsproject.org}}}} for them to run. (I am not
an expert in \fenics\ and suggestions to improve the codes are very
welcome.) Here are the available downloads on DPG methods:

\begin{itemize}
\item The \fenics\ code for implementing the Petrov Galerkin method of
  Example~\ref{eg:odeWIP} and generating Figure~\ref{fig:spectralPG}
  can be downloaded from
  \href{http://web.pdx.edu/~gjay/pub/1Dpgode.py}{\underline{here}}. The
  code also implements a comparable least square method and the
  computation of $L^2$ projections.
\item You can
  \href{http://web.pdx.edu/~gjay/pub/dpglaplace.py}{\underline{download}}
  download a \fenics\ implementation of the DPG method for the
  Dirichlet problem.
\item A code implementing an adaptive algorithm using the DPG error
  estimator is also
  \href{http://web.pdx.edu/~gjay/pub/dpglapladapt.py}{\underline{available}}. This
  code is modeled after a \fenics\ demo for
  standard finite elements.

\end{itemize}

\subsection*{Acknowledgments}

These are (unpublished) notes from a few of my lectures in a Spring
2013 graduate class at PSU (MTH 610). The DPG research is in close
collaboration with Leszek Demkowicz (see references below). I am
grateful to my students for their feedback on the notes and to
Kristian \O lgaard for clarifying \fenics\ syntax. I am also grateful
to NSF and AFOSR for supporting my research into DG and mixed methods
and for encouraging the integration of such research into graduate
education.

\subsection*{Bibliographic remarks}

The presentation in Section~\ref{sec:optimal-test-spaces}, including
the terminology of `optimal test spaces', Theorem~\ref{thm:quasiopt},
etc.~is based on~\cite{DemkoGopal11}. The DPG methods were developed
in a series of papers, beginning
with~\cite{DemkoGopal10a,DemkoGopal11}. The name ``DPG'' was
previously used by others~\cite{BottaMicheSacco02}, but without the
concept of optimal test functions. The interpretation as a mixed
formulation (Theorem~\ref{thm:mixed}) is motivated
by~\cite{DahmeHuangSchwa11a}.  Theorem~\ref{thm:inex} is
from~\cite{GopalQiu12a}. Operators such as $\varPi$, in the standard
mixed Galerkin context, are sometimes known as Fortin operators.
Theorems~\ref{thm:hybrid} and~\ref{thm:pdpghybrid} have not appeared
in this form previously.  Lemmas~\ref{lem:Poincare}
and~\ref{lem:harmonic} are from~\cite{DemkoGopal11a}, but the method
of Section~\ref{sec:laplacian} was developed later, independently
in~\cite{DemkoGopal13} and~\cite{BroerSteve12}.  A more comprehensive
bibliography is available in~\cite{DemkoGopal13rev}.

\vspace{-.5cm}

}}
\end{document}